\newtheorem{theorem}{Theorem}[section]
\newtheorem{lemma}[theorem]{{\bf Lemma}}
\newtheorem{cor}[theorem]{{\bf Corollary}}
\newtheorem{rem}[theorem]{{\bf Remark}}
\newtheorem{ex}[theorem]{{\bf Example}}
\newtheorem{definition}{Definition}[section]
\numberwithin{equation}{section}
\newenvironment{proof}{\indent{\em Proof:}}{\quad \hfill
$\Box$\vspace*{2ex}}
\font\Bbb=msbm10 at 12pt
\newcommand{\R}{\mbox{\Bbb R}}
\begin{document}
\setcounter{page}{1}
\begin{center}
\vspace{0.4cm} {\large{\bf Analysis of  Impulsive $\varphi$--Hilfer Fractional Differential Equations}}\\
\vspace{0.5cm}
Kishor D. Kucche $^{1}$ \\
kdkucche@gmail.com \\

\vspace{0.35cm}
Jyoti P. Kharade  $^{2}$\\
jyoti.thorwe@gmail.com\\

\vspace{0.35cm}
$^{1,2}$ Department of Mathematics, Shivaji University, Kolhapur-416 004, Maharashtra, India.
\end{center}
\def\baselinestretch{1.0}\small\normalsize
\begin{abstract}
This paper is concerned with the  existence and uniqueness,  and  Ulam--Hyers stabilities of solutions of nonlinear impulsive $\varphi$--Hilfer fractional differential equations. Further, we investigate the dependence of the solution on the initial conditions, order of derivative and the functions involved in the equations.  The outcomes are acquired in the space of weighted piecewise continuous functions by means of fixed point theorems and the generalized version of Gronwall inequality.
\end{abstract}
\noindent\textbf{Key words:} $\varphi$--Hilfer fractional derivative; Fixed point theorem; Ulam--Hyers Stability;  Dependence of solutions. 
 \\
\noindent
\textbf{2010 Mathematics Subject Classification:} 34A08, 34A12, 45M10, 34A37
\def\baselinestretch{1.5}

\allowdisplaybreaks
\section{Introduction}

The qualitative theory of impulsive fractional differential equations (FDEs) is of extraordinary significance in view of its applications in exhibiting numerous characteristic of physical phenomena which are appearing in the field of medicine,   biology, mechanics  and electrical engineering. Consequently numerous scientists \cite{fec}--\cite{Ali} engaged with working on impulsive FDEs and examining the existence and uniqueness, dependence and various kinds of stabilities of solution.

On the other hand, the representation formula for the solution of impulsive FDEs of the form:
\begin{equation} \label{imp01}
\begin{cases}
^c D^{\varrho}u(t)=f(t, u(t)),~t \in [0,T], t\neq t_{k},\\
\Delta u|_{t=t_{k}}= I_{k}(y(t_{k}^-)) \\
 u(0)=u_{0} \in \mathbb{R},
\end{cases}
\end{equation}
have different approaches \cite{fec,WAZN}.  Consequently  analysis of impulsive FDEs have been done with different representation formula of the solution.  Wang et al. \cite{Wang1,Wang22,Wang11,Wang2} have analyzed nonlinear impulsive differential equations with Caputo fractional derivative for existence, uniqueness and data dependence of solutions via generalized singular Gronwall inequalities.  Benchohra et al. \cite{Benchohra1, Benchohra,  Benchohra2, Benchohra3} investigated the sufficient conditions for the existence of solutions for different kinds of impulsive FDEs. Mophou \cite{Mophou} investigated the existence and uniqueness of a mild solution to impulsive
semilinear  FDEs. Zhang and Wang \cite{Zhang} examined
the existence of solutions for an anti-periodic boundary value
problem of nonlinear impulsive FDEs via fixed point theorems.
For other interesting work on analysis of different class of impulsive fractional differential equations we refer the reader to \cite{AS}--\cite{WAZ}.

On the other hand, Harrat et al.\cite{Harrat}, utilizing the tools of  fixed point technique, semigroup theory and multivalued analysis, investigated the  solvability and optimal controls of an impulsive nonlinear   delay evolution inclusion in Banach spaces with Hilfer fractional derivative. Harikrishnan et al.\cite{Harikrishnan}, established existence and stability of solutions
for impulsive Hilfer FDEs using fixed point theorem of  Banach and Schaefer. Ahmed  et al. \cite{Ahmed} examined existence and approximate controllability for Sobolev-type impulsive FDEs involving the Hilfer  derivative.

However, as talked about in \cite{Fernandez}, numerous definitions of fractional derivatives and integrals have been proposed in the literature. One can see that existence, uniqueness and many other essential qualitative properties of solution for FDEs have been demonstrated for similar type of nonlinear FDEs with different fractional order derivative operators. Hence it is have to research nonlinear FDEs with more general fractional operators which incorporates all the specific fractional derivative operators including Caputo  and Riemann-Liouville derivatives and all other well known fractional derivative operators such as Hadamard derivative, Katugampola derivative, Hilfer derivative, Chen derivative, Prabhakar derivative, Erdlyi-Kober derivative, Riesz derivative, Feller derivative, Weyl derivative, Cassar derivative etc. Hence it is imperative to analyze the impulsive FDEs with broad class of  fractional derivative operator  that incorporate various definitions of well known fractional derivatives.

The definition of the Riemann-Liouville fractional integral have been
extended to  a fractional integral of a function with respect to the another function viz. $ \varphi $-Riemann-Liouville fractional integral \cite[Chapter 2]{Kilbas}. Using  concept of generalized fractional integral,  the  Riemann-Liouville and Caputo version of fractional derivative have been introduced namely $ \varphi $-Riemann-Liouville fractional derivative \cite[Chapter 2]{Kilbas} and the  $\varphi$-Caputo fractional derivative \cite{Almeida}.  Properties of these generalized fractional derivatives one can find in \cite{Kilbas,Almeida}.  Jarad  and coauthors \cite{Ameen}--\cite{Jarad1} analyzed distinctive class of FDEs for existence, uniqueness and the Ulam-Hyers stabilities of solution. Following the technique of \cite{Kilbas, Almeida}, Sousa and Olivera \cite{Sousa1,Sousa2} presented a Hilfer version of fractional derivative viz. $\varphi$--Hilfer fractional derivative  which incorporate the Hilfer fractional derivative  \cite{Hilfer} as well as includes a wide class of well known fractional derivatives including most widely used Caputo  and Riemann-Liouville derivative. 

The list of all possible fractional derivatives which are the particular cases of $\varphi$--Hilfer fractional derivatives have been provided in \cite{Sousa1}.

 In \cite{JVCO}, Sousa et al. established  generalized Gronwall inequality involving $\varphi$-Riemann-Liouville fractional integral and utilized it to investigate the qualitative properties of solutions such as uniqueness, continuous dependence of solution on different data for the Cauchy-type problem with $ \varphi $ -Hilfer fractional derivative. Kucche et al. \cite{KKAM} investigated the existence and uniqueness, dependence of solution for $\varphi$--Hilfer FDEs in the weighted space of functions through Weissinger fixed point theorem. Further derived representation formula for the solution of linear Cauchy problem for $\varphi$--Hilfer FDEs obtained by using Picard’s successive approximation method.  
 
In \cite{KKJK},  Kucche et al. investigated the  formula for the solution of 
 nonlinear $\varphi$-Hilfer impulsive FDEs and established the
 existence and uniqueness results. It is  proved that the obtained formula for the solution  of nonlinear $\varphi$-Hilfer impulsive FDEs includes  the formula for the solution of  impulsive FDEs involving  Riemann-Liouville and Caputo derivatives.  Sousa et al. \cite{SousaKucche}  derived sufficient conditions to ensure existence and uniqueness of solutions and $ \delta $--Ulam--Hyers--Rassias stability of an impulsive FDEs involving $ \varphi $-Hilfer fractional derivative. Liu et al. \cite{Liu}  presented existence, uniqueness, and Ulam--Hyers--Mittag--Leffler stability of solutions to a class of $ \varphi $-Hilfer fractional-order delay differential equations through Picard operator thoery and a generalized Gronwall inequality.

In the present paper, we consider the nonlinear impulsive $\varphi$-Hilfer fractional differential equation ($\varphi$-HFDE) with initial condition of the form:
\begin{align}\label{p1}
\begin{cases}
 ^H \mathbf{D}^{\varrho,\, \nu; \, \varphi}_{a^+}u(t)=f(t, u(t)),~t \in J=[a,T]-\{t_1, t_2,\cdots ,t_m\}\\
\Delta \mathbf{I}_{a^+}^{1-\sigma; \, \varphi}u(t_k)= \mathcal{J}_k(u(t_k^-)),  ~k = 1,2,\cdots,m, \\
 \mathbf{I}_{a^+}^{1-\sigma; \, \varphi}u(a)=u_a \in \mathbb{R},  ~\sigma=\varrho+\nu-\varrho \nu, 
\end{cases}
\end{align}
where $\varphi\in C^{1}(\mathcal{I},\mathbb{R})$ be an increasing function with $\varphi'(x)\neq 0$, for all $x\in \mathcal{I}$, ~$^H \mathbf{D}^{\varrho, \, \nu; \, \varphi}_{a^+}(\cdot)$ is the $\varphi$-Hilfer fractional derivative \cite{Sousa1} of order $\varrho~ (0<\varrho<1)$ and type $\nu ~(0\leq\nu\leq 1)$ defined by 
$$
^H \mathbf{D}^{\varrho, \, \nu; \, \varphi}_{a^+}f(t)= \mathbf{I}_{a^+}^{\nu ({1-\varrho});\, \varphi} \left(\frac{1}{{\varphi}^{'}(t)}\frac{d}{dt}\right)^{'}\mathbf{I}_{a^+}^{(1-\nu)(1-\varrho);\, \varphi} f(t),
$$
and $\mathbf{I}_{a^+}^{1-\sigma; \, \varphi}$ is left sided $\varphi$-Riemann Liouville fractional integration operator \cite{Sousa1}, which defined for any $\varrho>0$ 
$$
\mathbf{I}_{a+}^{\varrho ;\, \varphi }f\left( t\right) :=\frac{1}{\Gamma \left( \varrho
\right) }\int_{a}^{t}\varphi ^{\prime }\left(\sigma \right) \left( \varphi \left(
t\right) -\varphi \left( \sigma \right) \right) ^{\varrho-1}f\left( \sigma \right) d\sigma.
$$
Let $ ~a=t_0< t_1 < t_2 < \cdots < t_m < t_{m+1}=T$,~$
\Delta \mathbf{I}_{a^+}^{1-\sigma; \, \varphi}u(t_k)= \mathbf{I}_{a^+}^{1-\sigma; \, \varphi}u(t_k^+)- \mathbf{I}_{a^+}^{1-\sigma; \, \varphi}u(t_k^-) 
$, 
~$
 \mathbf{I}_{a^+}^{1-\sigma; \, \varphi}u(t_k^+) = \lim_{\epsilon\to 0^+} \mathbf{I}_{a^+}^{1-\sigma; \, \varphi}u(t_k + \epsilon)$ 
~~\mbox{and}~~
$\mathbf{I}_{a^+}^{1-\sigma; \, \varphi}u(t_k^-) = \lim_{\epsilon\to 0^-} \mathbf{I}_{a^+}^{1-\sigma; \, \varphi}u(t_k + \epsilon)
$.  The functions $f:(a,T]\times \mathbb{R} \to \mathbb{R}$ and $\mathcal{J}_k:\mathbb{R}\to \mathbb{R}$ are appropriate functions specified latter.

The motivation for the  work presented in the present paper is originated from  \cite{Wang1,Wang2,KKJK}. Main objective of the present paper is to establish the existence results for the impulsive $\varphi$-Hilfer fractional differential equation (impulsive  $\varphi$-HFDE ) with initial condition via Schaefer fixed point theorem. Additionally, by utilizing the generalized version of Gronwall inequality, we  examine the uniqueness of solution,  dependence of the solution on the initial conditions, order of the $ \varphi $-Hilfer derivative and the functions involved in the equations. Further, we investigate the  Ulam--Hyers and Ulam--Hyers--Rassias stabilities via generalized version of Gronwall inequality.

The remainder of this paper is organized as follows. In Section 2, existence results are exhibited. In Section 3, the dependence of the solution associated with initial conditions, the functions involved in the equations and the order of $ \varphi $-Hilfer derivative have been   examined. Section 4, deals with the Ulam--Hyers and Ulam--Hyers--Rassias stabilities for \eqref{p1}. Finally in section 5, we give an example to illustrate the obtained results.

\section{Preliminaries} \label{preliminaries}
Consider the weighted space \cite{Sousa1} defined by
 $$
C_{1-\sigma;\varphi}(\mathcal{I})=\left\{u:(a,b]\to\mathbb{R} : ~(\varphi(t)-\varphi(a))^{1-\sigma}u(t)\in C(\mathcal{I})\right\},0< \sigma\leq 1.
$$
Define the weighted space of piecewise continuous functions as
\begin{align*}
\mathcal{PC}_{1-\sigma; \,  \varphi}(\mathcal{I},\mathbb{R}) =\{& u:(a,b]\to\mathbb{R} :u\in C_{1-\sigma;\varphi}((t_k,t_{k+1}],\mathbb{R}),k=0,1,2,\cdots, m,\\
& \mathbf{I}_{a^+}^{1-\sigma; \, \varphi} \, u(t_k^+), ~ \mathbf{I}_{a^+}^{1-\sigma; \, \varphi}\, u(t_k^-) ~\mbox{exists and}  ~\mathbf{I}_{a^+}^{1-\sigma; \, \varphi}\, u(t_k^-)= \mathbf{I}_{a^+}^{1-\sigma; \, \varphi} \, u(t_k) \\
& ~\mbox{for} ~ \, k = 1,2,\cdots, m \}
\end{align*}
Clearly, $\mathcal{PC}_{1-\sigma; \, \varphi}(\mathcal{I},\mathbb{R})$ is a Banach space with the norm
$$
\|u\|_{\mathcal{PC}_{1-\sigma; \,  \varphi}(\mathcal{I},\mathbb{R})} = \sup_ {t\in \mathcal{I}} \left|(\varphi(t)-\varphi(a))^{1-\sigma}u(t)\right|.
$$
Note that for $\sigma =1$, we get 
$ \mathcal{PC}_{0; \, \varphi}(\mathcal{I},\mathbb{R})=PC(\mathcal{I},\mathbb{R})$ 
a particular case of the space $\mathcal{PC}_{1-\sigma; \,  \varphi}(\mathcal{I},\mathbb{R})$, whose  details are given in \cite{Benchohra,Wang, Bai}.

\begin{lemma}[$\mathcal{PC}_{1-\sigma; \, \varphi}$ type Arzela-–Ascoli Theorem, \cite{KKJK}] \label{pc}

Let $\mathcal{X}$ be a Banach space and $\mathcal{W}_{1-\sigma;\, \varphi} \subset \mathcal{PC}_{1-\sigma; \, \varphi}(\mathcal{J},\mathcal{X}).$ If the following conditions are satisfied:
\begin{itemize} 
\item [{\rm(a)}]$\mathcal{W}_{1-\sigma;\, \varphi}$ is uniformly bounded subset of $\mathcal{PC}_{1-\sigma; \, \varphi}(\mathcal{J}, \mathcal{X})$;
\item [{\rm(b)}] $\mathcal{W}_{1-\sigma;\, \varphi}$ is equicontinuous in $(t_k, t_{k+1}), k = 0, 1, 2,\cdots , m, where~ t_0 = a, t_{m+1} = T$ ;
\item [{\rm(c)}]$\mathcal{W}_{1-\sigma;\, \varphi}(t) = \{u(t): u \in \mathcal{W}_{1-\sigma;\, \varphi}, ~t \in \mathcal{J} - {t_1, \cdots , t_m}\}, \mathcal{W}_{1-\sigma;\, \varphi}(t_k^+ ) = \{u(t_k^+ ): u \in  \mathcal{W}_{1-\sigma;\, \varphi}\}~ \text{and}~ \mathcal{W}_{1-\sigma;\, \varphi}(t_k^- ) = \{u(t_k^- ): u \in  \mathcal{W}_{1-\sigma;\, \varphi}\}$ are relatively compact subsets of X, 
\end{itemize}
then $\mathcal{W}_{1-\sigma;\, \varphi}$ is a relatively compact subset of $\mathcal{PC}_{1-\sigma; \, \varphi}(\mathcal{J}, X)$.
\end{lemma}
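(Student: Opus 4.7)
\emph{Proof plan.} My plan is to reduce the statement to the classical piecewise Arzela--Ascoli theorem in $PC(\mathcal{J},\mathcal{X})$ by absorbing the weight into the unknown. Define the map $T: \mathcal{PC}_{1-\sigma;\varphi}(\mathcal{J},\mathcal{X}) \to PC(\mathcal{J},\mathcal{X})$ by $(Tu)(t) := (\varphi(t) - \varphi(a))^{1-\sigma}\,u(t)$. By the definition of $\mathcal{PC}_{1-\sigma;\varphi}$, each $Tu$ is continuous on every subinterval $(t_k, t_{k+1}]$ and admits the prescribed one-sided limits at the jump points, hence lies in $PC(\mathcal{J},\mathcal{X})$; moreover $T$ is an isometric bijection since $\|Tu\|_{PC} = \sup_{t\in \mathcal{J}}|(\varphi(t)-\varphi(a))^{1-\sigma}u(t)| = \|u\|_{\mathcal{PC}_{1-\sigma;\varphi}}$. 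Relative compactness of $\mathcal{W}_{1-\sigma;\varphi}$ in the weighted space is therefore equivalent to relative compactness of $\widetilde{\mathcal{W}} := T(\mathcal{W}_{1-\sigma;\varphi})$ in the unweighted $PC(\mathcal{J},\mathcal{X})$.

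I next translate the three conditions to $\widetilde{\mathcal{W}}$. Uniform boundedness of $\widetilde{\mathcal{W}}$ is immediate from (a) together with the isometry. Pointwise relative compactness of $\widetilde{\mathcal{W}}(t)$ (and of $\widetilde{\mathcal{W}}(t_k^\pm)$) follows from (c), because $\widetilde{\mathcal{W}}(t) = (\varphi(t)-\varphi(a))^{1-\sigma}\,\mathcal{W}_{1-\sigma;\varphi}(t)$ is the image of a relatively compact set under the continuous linear map $x \mapsto (\varphi(t)-\varphi(a))^{1-\sigma} x$. For equicontinuity of $\widetilde{\mathcal{W}}$ on each $(t_k, t_{k+1})$, I read (b) as equicontinuity in the ambient weighted norm, i.e.\ of the family $\{Tu\}$; should one prefer to read (b) as equicontinuity of the plain family $\{u\}$, the same conclusion follows, since the weight $w(t) = (\varphi(t)-\varphi(a))^{1-\sigma}$ is uniformly continuous on each $[t_k, t_{k+1}]$ and the $u$'s are uniformly bounded on the piece by (a).

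With these hypotheses in hand, the classical Arzela--Ascoli theorem, applied successively on the closures $[a,t_1], [t_1,t_2], \ldots, [t_m,T]$ together with a finite diagonal selection, produces from any sequence $(v_n) \subset \widetilde{\mathcal{W}}$ a subsequence converging uniformly on all of $\mathcal{J}$, hence in the $PC$-norm; pulling this subsequence back through $T^{-1}$ proves the theorem. The main obstacle I anticipate is the endpoint extension: hypothesis (b) gives equicontinuity only on the open subinterval $(t_k,t_{k+1})$, while Arzela--Ascoli on the closed interval requires equicontinuity up to the endpoints. To bridge this I would use the uniform equicontinuity on the open interval to deduce the existence of uniform one-sided limits (Cauchy criterion), then combine these limits with the relatively compact sets $\widetilde{\mathcal{W}}(t_k^\pm)$ from (c) to extend equicontinuity to $[t_k,t_{k+1}]$. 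Getting this endpoint argument correct is the only delicate part; the rest is the routine finite diagonal assembly.
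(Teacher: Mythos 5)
The paper itself gives no proof of this lemma: it is quoted verbatim from \cite{KKJK}, so there is no in-paper argument to compare against. Your plan --- transfer by the isometry $(Tu)(t)=(\varphi(t)-\varphi(a))^{1-\sigma}u(t)$ to the unweighted piecewise setting, extend each piece to the closed subinterval $[t_k,t_{k+1}]$ via the one-sided limits, apply the classical Arzel\`a--Ascoli theorem on each closed piece, and assemble by a finite diagonal selection --- is the standard proof of such $PC$-type Arzel\`a--Ascoli results and is correct in outline. Two small points on the reduction: you do not need $T$ to be onto $PC(\mathcal{J},\mathcal{X})$ (total boundedness transfers along an isometry, and the weighted space is complete), and at the left endpoint $a$ condition (c) does not literally give relative compactness of the set of weighted limits $\{\lim_{t\to a^+}(Tu)(t)\}$; you must extract it from (c) at interior points together with equicontinuity near $a$ (a short $\varepsilon$-net argument), exactly as you propose to do at the points $t_k^{\pm}$.

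Two items need correction or precision. First, condition (b) must be read as \emph{uniform} equicontinuity of the weighted family $\{Tu\}$ on each open piece (this is how the lemma is used in Step 3 of Theorem 3.1, where the modulus of continuity obtained is uniform in $u$); your Cauchy-criterion endpoint argument requires this uniformity, and with merely pointwise equicontinuity the statement is false: take $\varphi(t)=t$, $\sigma=1$, $m=0$, $\mathcal{J}=[0,1]$ and $u_n(t)=nt/(1+nt)$; then (a) and (c) hold and the family is equicontinuous at every point of $(0,1)$ (the derivatives are bounded by $1/(4t)$ uniformly in $n$), yet $\sup_{t\in(0,1]}|u_n(t)-u_m(t)|\ge 1/2$ for $n\gg m$, so no subsequence converges in the norm. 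Second, your fallback remark --- that if (b) is read for the plain family $\{u\}$ the same conclusion follows because ``the $u$'s are uniformly bounded on the piece by (a)'' --- fails on the first subinterval $(a,t_1)$ when $\sigma<1$: hypothesis (a) bounds only the weighted functions, so $|u(t)|$ may grow like $(\varphi(t)-\varphi(a))^{\sigma-1}$ as $t\to a^+$, and plain equicontinuity of $\{u\}$ together with (a) does not yield equicontinuity of $\{wu\}$ near $a$. Either delete that remark or restrict it to the pieces with $k\ge 1$, on which the weight is bounded away from zero; with your primary reading of (b) the rest of the plan goes through.
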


\begin{theorem}[Schaefer, \cite{zhou}]\label{kf}
Let $\mathcal{F}:C(J,\mathbb{R})\to C(J,\mathbb{R}) $ be a completely continuous operator. If the set 
$$
\mathbf{G}(\mathcal{F})=\left\lbrace u \in C(J,\mathbb{R}): u=\lambda \, \mathcal{F}(u), \text{for some}\, \lambda \in (0,1) \right\rbrace  
$$
is bounded, then $\mathcal{F}$ has at least one fixed point.
\end{theorem}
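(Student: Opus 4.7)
The plan is to obtain Schaefer's theorem as a consequence of Schauder's fixed point theorem together with a radial retraction argument. Since $\mathbf{G}(\mathcal{F})$ is bounded, I first fix $M>0$ such that $\|u\|\leq M$ for every $u\in\mathbf{G}(\mathcal{F})$ and pick any $R>M$. Let $B=\{u\in C(J,\mathbb{R}): \|u\|\leq R\}$, which is a closed, convex, bounded subset of the Banach space $C(J,\mathbb{R})$, and define the radial retraction $r:C(J,\mathbb{R})\to B$ by $r(u)=u$ when $\|u\|\leq R$ and $r(u)=Ru/\|u\|$ when $\|u\|>R$. A direct check shows that $r$ is continuous and maps into $B$.

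Next I consider the composition $\mathcal{G}=r\circ\mathcal{F}:B\to B$. Because $\mathcal{F}$ is completely continuous on $C(J,\mathbb{R})$, it sends the bounded set $B$ into a relatively compact subset, and composition with the continuous map $r$ preserves this property, so $\mathcal{G}$ is completely continuous on the closed convex set $B$. Applying the Schauder fixed point theorem to $\mathcal{G}$ then yields some $u^{*}\in B$ with $r(\mathcal{F}(u^{*}))=u^{*}$.

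The final step is to upgrade $u^{*}$ to a genuine fixed point of $\mathcal{F}$. If $\|\mathcal{F}(u^{*})\|\leq R$, then $r$ acts as the identity on $\mathcal{F}(u^{*})$ and one immediately has $\mathcal{F}(u^{*})=u^{*}$, completing the proof. Otherwise $\|\mathcal{F}(u^{*})\|>R$ and the definition of $r$ gives $u^{*}=\lambda\,\mathcal{F}(u^{*})$ with $\lambda=R/\|\mathcal{F}(u^{*})\|\in(0,1)$; this places $u^{*}\in\mathbf{G}(\mathcal{F})$, so $\|u^{*}\|\leq M<R$. But by construction $\|u^{*}\|=\|r(\mathcal{F}(u^{*}))\|=R$, a contradiction, so this case cannot occur.

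The main obstacle is this last dichotomy: one has to combine the explicit form of $r$ with the \emph{a priori} bound supplied by the hypothesis on $\mathbf{G}(\mathcal{F})$ to rule out the ``outside the ball'' case. The remaining ingredients, namely continuity of the retraction, the complete continuity of $\mathcal{G}=r\circ\mathcal{F}$, and the applicability of Schauder's theorem on $B$, are essentially routine once the correct $R>M$ is chosen.
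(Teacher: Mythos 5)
The paper does not prove this statement at all: it is quoted as a known result (Schaefer's fixed point theorem) with a citation to the reference \cite{zhou}, so there is no internal proof to compare against. Your argument is the standard textbook derivation of Schaefer's theorem from Schauder's theorem via the radial retraction, and it is correct: the retraction $r$ is continuous, $\mathcal{G}=r\circ\mathcal{F}$ maps the closed ball $B$ into itself and has relatively compact range (the continuous image of the relatively compact set $\mathcal{F}(B)$ under $r$), so Schauder applies on $B$ (strictly speaking, the version of Schauder for a completely continuous self-map of a closed bounded convex set, or equivalently one passes to the closed convex hull of $\mathcal{G}(B)$, which is compact by Mazur's theorem), and your dichotomy at the fixed point $u^{*}=r(\mathcal{F}(u^{*}))$ correctly rules out the case $\|\mathcal{F}(u^{*})\|>R$, since it would force $u^{*}\in\mathbf{G}(\mathcal{F})$ with $\|u^{*}\|=R>M$, contradicting the a priori bound. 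In short, where the paper uses the theorem as a black box, you have supplied a complete and standard proof of it; the only point worth making explicit is which form of Schauder's theorem you invoke, as the closed ball itself is not compact in the infinite-dimensional space $C(J,\mathbb{R})$.
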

\begin{lemma}[\cite{Jose}]\label{JI}
Let $ \mathcal{U}\in {PC}_{1-\sigma;\, \varphi}\left( J,\,\mathbb{R}\right) $ satisfying the following inequality
$$
\mathcal{U}(t)\leq \mathcal{V}(t)+ \mathbf{g}(t)\,\int_{a}^{t}\varphi^{'}(s)(\varphi(t)-\varphi(s))^{\varrho-1}\,\mathcal{U}(s)\, ds + \sum_{a<t_k<t}\beta_k \mathcal{U}(t_k^-),\, t>a,
$$
where $\mathbf{g}$ is a continuous function, $\mathcal{V}\in {PC}_{1-\sigma;\, \varphi}\left( J,\,\mathbb{R}\right) $ is non-negative, $\beta_k > 0$ for $k=1,2,\cdots,m,$ then we have
\begin{align*}
\mathcal{U}(t)\leq \mathcal{V}(t) &\left[\prod_{i=1}^{k}\left\lbrace 1+\beta_i E_\varrho \left(\mathbf{g}(t)\Gamma(\varrho)(\varphi(t_i)-\varphi(a))^\varrho \right)  \right\rbrace  \right]\\
&\times E_\varrho\left(\mathbf{g}(t)\Gamma(\varrho)(\varphi(t)-\varphi(a))^\varrho \right),\,\, t\in (t_k,t_{k+1}]
\end{align*} 
\end{lemma}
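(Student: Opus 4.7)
The plan is to reduce this piecewise Gronwall inequality to the non--impulsive $\varphi$--Riemann--Liouville Gronwall inequality of Sousa et al.\ \cite{JVCO} and then propagate the bound across the impulse points by induction on the interval index $k$.

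First I would recall the classical non--impulsive version: if $w\in C((a,T])$ is nonnegative and
$$w(t)\le h(t)+\mathbf{g}(t)\int_a^t\varphi'(s)(\varphi(t)-\varphi(s))^{\varrho-1}w(s)\,ds,$$
then $w(t)\le h(t)\,E_\varrho(\mathbf{g}(t)\Gamma(\varrho)(\varphi(t)-\varphi(a))^\varrho)$. The proof, which I would either cite or reproduce for completeness, iterates the integral operator, identifies the $n$-th iterated kernel as a multiple of $(\varphi(t)-\varphi(s))^{n\varrho-1}/\Gamma(n\varrho)$ via the semigroup property of $\mathbf{I}_{a^+}^{\varrho;\varphi}$, and sums the resulting series to recognise the Mittag--Leffler function $E_\varrho$.

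Next I would proceed by induction on $k\in\{0,1,\ldots,m\}$. Base case $t\in(a,t_1]$: the impulsive sum is empty, the hypothesis reduces exactly to the non--impulsive Gronwall, and the conclusion holds with the empty product equal to $1$. Inductive step: fix $t\in(t_k,t_{k+1}]$ and split the impulsive contribution as $\sum_{i=1}^{k}\beta_i\mathcal{U}(t_i^-)$; substituting into each $\mathcal{U}(t_i^-)$ the bound supplied by the induction hypothesis (evaluated at $t_i$) yields an inequality of the form
$$\mathcal{U}(t)\le \widetilde{\mathcal{V}}(t)+\mathbf{g}(t)\int_a^t\varphi'(s)(\varphi(t)-\varphi(s))^{\varrho-1}\mathcal{U}(s)\,ds,$$
with $\widetilde{\mathcal{V}}(t)=\mathcal{V}(t)+\sum_{i=1}^{k}\beta_i\,(\text{bound on }\mathcal{U}(t_i^-))$. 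A single application of the non--impulsive Gronwall then supplies the Mittag--Leffler factor $E_\varrho(\mathbf{g}(t)\Gamma(\varrho)(\varphi(t)-\varphi(a))^\varrho)$ on $(t_k,t_{k+1}]$.

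The main technical obstacle will be collecting $\widetilde{\mathcal{V}}(t)$ into the stated product form. The bookkeeping rests on the elementary identity
$$\prod_{i=1}^{k}(1+x_i)=\prod_{i=1}^{k-1}(1+x_i)+x_k\prod_{i=1}^{k-1}(1+x_i),$$
which lets one verify by a secondary induction that substituting the previous--stage bound for $\mathcal{U}(t_k^-)$, namely $\mathcal{V}(t_k)\prod_{i=1}^{k-1}(1+\beta_i E_\varrho(\ldots))E_\varrho(\ldots)$, into $\beta_k$ times that quantity and adding it to the already accumulated factors exactly reconstitutes the new factor $\bigl(1+\beta_k E_\varrho(\mathbf{g}(t)\Gamma(\varrho)(\varphi(t_k)-\varphi(a))^\varrho)\bigr)$. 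Monotonicity of $E_\varrho$ in its argument, together with $\varphi(t_i)\le \varphi(t)$ for $i\le k$ since $\varphi$ is increasing, is what justifies majorising the $t_i$-dependent Mittag--Leffler factor by its value at the current $t$ wherever needed. Once this combinatorial step is handled cleanly, the stated inequality follows on every subinterval $(t_k,t_{k+1}]$, completing the proof.
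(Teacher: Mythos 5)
The paper itself contains no proof of this lemma: it is imported verbatim from \cite{Jose}, so there is no in-paper argument to compare yours against. Your overall strategy --- induction over the impulse points, treating the accumulated jump terms as part of the forcing, applying the non-impulsive $\varphi$-Riemann--Liouville Gronwall inequality of \cite{JVCO} once per subinterval, and rebuilding the stated product through the identity $\prod_{i=1}^{k}(1+x_i)=\prod_{i=1}^{k-1}(1+x_i)+x_k\prod_{i=1}^{k-1}(1+x_i)$ --- is the standard route by which such impulsive fractional Gronwall lemmas are proved in the literature, including in the cited source, so the plan is the right one.

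However, as written the argument does not close, and the obstruction is concrete: everything hinges on monotonicity that you do not have. First, the closed Mittag--Leffler form of the non-impulsive inequality, $w(t)\le h(t)\,E_\varrho\bigl(\mathbf{g}(t)\Gamma(\varrho)(\varphi(t)-\varphi(a))^{\varrho}\bigr)$, is only valid when $h$ is nondecreasing (and $\mathbf{g}\ge 0$); without that one only gets the iterated-kernel series bound with $h(s)$ under the integral, and your $\widetilde{\mathcal{V}}$ contains $\mathcal{V}(t)$, which is not assumed monotone. Second, substituting the inductive bound for $\mathcal{U}(t_i^-)$ produces factors $\mathcal{V}(t_i)$ and $E_\varrho\bigl(\mathbf{g}(t_i)\Gamma(\varrho)(\varphi(t_j)-\varphi(a))^{\varrho}\bigr)$, whereas the asserted bound has $\mathcal{V}(t)$ in front and $\mathbf{g}(t)$ inside every Mittag--Leffler factor; the monotonicity of $E_\varrho$ and of $\varphi$ that you invoke does not yield $\mathcal{V}(t_i)\le\mathcal{V}(t)$ or $\mathbf{g}(t_i)\le\mathbf{g}(t)$. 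Indeed, from the hypotheses as printed (merely continuous $\mathbf{g}$, merely nonnegative $\mathcal{V}$) the conclusion is false: take $\mathbf{g}\equiv 0$, $\beta_1=1$, $\mathcal{V}=1$ on $(a,t_1]$ and $\mathcal{V}=0$ on $(t_1,t_2]$, $\mathcal{U}\equiv 1$; the hypothesis holds but the claimed bound forces $\mathcal{U}\le 0$ past $t_1$. So no proof can succeed from the statement as given; you must add (as the source effectively does, and as holds in every application in this paper, where $\mathbf{g}(t)=(\varphi(t)-\varphi(a))^{1-\sigma}/\Gamma(\varrho)$ is nondecreasing and $\mathcal{V}$ is constant or nondecreasing) that $\mathcal{V}$ and $\mathbf{g}$ are nondecreasing, or else keep the $t_i$-evaluated quantities in the product. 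One further small point: before applying the non-impulsive Gronwall on $(a,t]$ you must check that the reduced inequality with $\widetilde{\mathcal{V}}$ holds for \emph{all} $s\in(a,t]$, not just for $s$ in the current subinterval; this is fine because earlier times carry fewer impulse terms and the substituted bounds are nonnegative, but it should be said. With these repairs your induction is complete and agrees with the argument in \cite{Jose}.
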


To analyze the impulsive $\varphi$-HFDE \eqref{p1}, we utilize its equivalent fractional integral given in the following Lemma.

\begin{lemma} [\cite{KKJK}]\label{fie}
Let $h: J\to \mathbb{R}$ be a continuous function.  Then a function $u \in {PC}_{1-\sigma;\, \varphi}\left( J,\,\mathbb{R}\right)$ is a solution of impulsive $\varphi$--HFDE  \eqref{p1}
if and only if u is a solution of the following fractional integral equation
\begin{equation}\label{e14}
u(t) =
\begin{cases}
\frac{(\varphi(t)-\varphi(a))^{\sigma-1}}{\Gamma(\sigma)}\, u_a +\mathbf{I}_{a^+}^{\varrho; \, \varphi}h(t),~ \text{$t \in [a,t_1], $}\\
 \frac{(\varphi(t)-\varphi(a))^{\sigma-1}}{\Gamma(\sigma)}\, \left(u_a +\sum_{i=1}^{k}\mathcal{J}_i(u(t_i^-))\right)+\mathbf{I}_{a^+}^{\varrho; \, \varphi}h(t), ~\text{ $t \in (t_k,t_{k+1}],~ k=1,2,\cdots,m $}.  
\end{cases}
\end{equation}
\end{lemma}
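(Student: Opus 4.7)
The plan is to establish both implications interval by interval, leveraging the composition identities for the $\varphi$-Hilfer derivative together with an inductive use of the jump conditions.

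For the forward implication I would first isolate the interval $[a,t_1]$, on which no impulse has yet occurred, so the problem reduces to an ordinary Cauchy problem. Applying $\mathbf{I}_{a^+}^{\varrho;\varphi}$ to $^H\mathbf{D}^{\varrho,\nu;\varphi}_{a^+}u(t)=h(t)$ and invoking the composition formula $\mathbf{I}_{a^+}^{\varrho;\varphi}\,^H\mathbf{D}^{\varrho,\nu;\varphi}_{a^+}u(t)=u(t)-\frac{(\varphi(t)-\varphi(a))^{\sigma-1}}{\Gamma(\sigma)}\mathbf{I}_{a^+}^{1-\sigma;\varphi}u(a)$, together with $\mathbf{I}_{a^+}^{1-\sigma;\varphi}u(a)=u_a$, yields the first branch of \eqref{e14}. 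For a general interval $(t_k,t_{k+1}]$ I would proceed by induction on $k$: the differential equation is again satisfied classically there, so the same composition rule applies but with the effective initial value $\mathbf{I}_{a^+}^{1-\sigma;\varphi}u(t_k^+)$ playing the role of $u_a$. Telescoping the jump relations $\mathbf{I}_{a^+}^{1-\sigma;\varphi}u(t_i^+)=\mathbf{I}_{a^+}^{1-\sigma;\varphi}u(t_i^-)+\mathcal{J}_i(u(t_i^-))$ from $i=1$ up to $k$, starting from $\mathbf{I}_{a^+}^{1-\sigma;\varphi}u(a)=u_a$, expresses this effective initial value as $u_a+\sum_{i=1}^k\mathcal{J}_i(u(t_i^-))$, which is exactly the coefficient appearing in the second branch of \eqref{e14}.

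For the converse, starting from \eqref{e14}, I would apply $^H\mathbf{D}^{\varrho,\nu;\varphi}_{a^+}$ to each branch and use the standard identities $^H\mathbf{D}^{\varrho,\nu;\varphi}_{a^+}(\varphi(t)-\varphi(a))^{\sigma-1}=0$ and $^H\mathbf{D}^{\varrho,\nu;\varphi}_{a^+}\mathbf{I}_{a^+}^{\varrho;\varphi}h(t)=h(t)$ to recover the differential equation pointwise on $J\setminus\{t_1,\ldots,t_m\}$. The initial condition then follows by applying $\mathbf{I}_{a^+}^{1-\sigma;\varphi}$ to the first branch, using $\mathbf{I}_{a^+}^{1-\sigma;\varphi}\frac{(\varphi(t)-\varphi(a))^{\sigma-1}}{\Gamma(\sigma)}=1$ together with $\mathbf{I}_{a^+}^{1-\sigma+\varrho;\varphi}h(t)\to 0$ as $t\to a^+$. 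The impulse conditions are verified by computing the left and right limits of $\mathbf{I}_{a^+}^{1-\sigma;\varphi}u$ at each $t_k$ from the two adjacent branches: the $\mathbf{I}_{a^+}^{\varrho;\varphi}h$ contributions cancel in the limit, and the difference of the algebraic prefactors yields exactly $\mathcal{J}_k(u(t_k^-))$.

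The main technical obstacle is the handling of the composition rule across impulse points. Because $\mathbf{I}_{a^+}^{1-\sigma;\varphi}u$ is discontinuous at each $t_k$, the naive composition identity holds only on subintervals where this function is smooth, so one must carry the updated post-impulse datum $\mathbf{I}_{a^+}^{1-\sigma;\varphi}u(t_k^+)$ from one subinterval to the next and assemble the full formula piecewise. Once this bookkeeping is in place, the remaining calculations reduce to routine manipulations of $\varphi$-Riemann--Liouville integrals, in particular the Beta-type identity underpinning $\mathbf{I}_{a^+}^{1-\sigma;\varphi}\frac{(\varphi(t)-\varphi(a))^{\sigma-1}}{\Gamma(\sigma)}=1$.
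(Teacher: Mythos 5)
Your overall plan is the natural one and matches the skeleton of the argument in \cite{KKJK} (the present paper does not prove Lemma \ref{fie}; it imports it): use the Sousa--Oliveira Cauchy-problem equivalence on $[a,t_1]$, then extend interval by interval, telescoping the impulse conditions into the coefficient $u_a+\sum_{i=1}^{k}\mathcal{J}_i(u(t_i^-))$.

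However, the step you yourself call ``the main technical obstacle'' is exactly where your sketch stops being a proof, and the shortcut you propose for it would fail if carried out literally. In the forward direction, the identity $\mathbf{I}_{a^+}^{\varrho;\,\varphi}\,{}^H\mathbf{D}^{\varrho,\nu;\,\varphi}_{a^+}u(t)=u(t)-\frac{(\varphi(t)-\varphi(a))^{\sigma-1}}{\Gamma(\sigma)}\mathbf{I}_{a^+}^{1-\sigma;\,\varphi}u(a)$ is established under (absolute) continuity of $\mathbf{I}_{a^+}^{1-\sigma;\,\varphi}u$ on all of $[a,t]$; on $(t_k,t_{k+1}]$ both operators remain anchored at $a$ and integrate across the earlier impulse points, so ``the same composition rule with $\mathbf{I}_{a^+}^{1-\sigma;\,\varphi}u(t_k^+)$ playing the role of $u_a$'' is not an instance of that identity but precisely the statement that must be derived (e.g.\ by redoing the composition computation piecewise and tracking the boundary terms generated at each $t_i$). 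In the converse direction, your verification of $\Delta\mathbf{I}_{a^+}^{1-\sigma;\,\varphi}u(t_k)=\mathcal{J}_k(u(t_k^-))$ does not go through as described: with $u$ defined piecewise by \eqref{e14} and the one-sided values $\mathbf{I}_{a^+}^{1-\sigma;\,\varphi}u(t_k^{\pm})$ taken literally as in the paper, the additional prefactor $\frac{(\varphi(\cdot)-\varphi(a))^{\sigma-1}}{\Gamma(\sigma)}\,\mathcal{J}_k(u(t_k^-))$ appears in the integrand only on $(t_k,t]$, and its contribution, $\frac{\mathcal{J}_k(u(t_k^-))}{\Gamma(\sigma)\Gamma(1-\sigma)}\int_{t_k}^{t}\varphi'(s)(\varphi(t)-\varphi(s))^{-\sigma}(\varphi(s)-\varphi(a))^{\sigma-1}\,ds$, tends to $0$ as $t\to t_k^+$, not to $\mathcal{J}_k(u(t_k^-))$. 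The Beta-type identity $\mathbf{I}_{a^+}^{1-\sigma;\,\varphi}\bigl[\frac{(\varphi(\cdot)-\varphi(a))^{\sigma-1}}{\Gamma(\sigma)}\bigr](t)=1$ is available only when the power function is integrated from $a$, i.e.\ when each branch formula is treated as if it held on the entire history $[a,t]$ --- the convention effectively adopted in \cite{KKJK} but nowhere stated in your proposal. You must either make that convention explicit and run both implications inside it, or perform the limit computation honestly, in which case a genuine jump of $\mathbf{I}_{a^+}^{1-\sigma;\,\varphi}u$ at $t_k$ forces a term anchored at $t_k$, of the form $\frac{(\varphi(t)-\varphi(t_k))^{\sigma-1}}{\Gamma(\sigma)}\mathcal{J}_k(u(t_k^-))$, whose $\varphi$-fractional integral does have unit jump. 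As written, the proposal asserts the conclusion at precisely the delicate point.
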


\section{Existence results}

In this section, we derive the existence of solution to the problem \eqref{p1} by means of Schaefer's fixed point theorem.
\begin{theorem}\label{exi}
Assume that:
\begin{itemize}
\item[($H_1$)] The function $f:(a,T]\times \mathbb{R} \to \mathbb{R}$ is continuous and satisfy the following conditions:
\begin{itemize}
\item [(i)] $f(\cdot,u(\cdot))\in {\mathcal{PC}}_{1-\sigma;\, \varphi}\left( J,\,\mathbb{R}\right)$ for any $u\in {\mathcal{PC}}_{1-\sigma;\, \varphi}\left( J,\,\mathbb{R}\right),$

\item [(ii)] $\left| f(t,u(t))-f(t,v(t))\right|\leq (\varphi(t)-\varphi(a))^{1-\sigma}\,|u(t)-v(t)| 
$, $t\in J$,~ $u,v \in {\mathcal{PC}}_{{1-\sigma};\,\varphi} \left( J,\,\mathbb{R}\right),$
\end{itemize}
\item[($H_2$)] The functions $\mathcal{J}_k:\mathbb{R}\to \mathbb{R}$, ($k=1,2,\cdots,m.$) satisfy the condition

\begin{itemize}
\item [(i)] $
\left| \mathcal{J}_k(u(t_k^-))-\mathcal{J}_k(v(t_k^-))\right|\leq (\varphi(t_k^-)-\varphi(a))^{1-\sigma}\,|u(t_k^-)-v(t_k^-)|$,~ $u\in {\mathcal{PC}}_{1-\sigma;\, \varphi}\left( \mathcal{J},\,\mathbb{R}\right),$
\item [(ii)] there exist $\zeta_k > 0$ such that $|\mathcal{J}_k(u(t_k^-))| \leq \zeta_k$, ~$u\in {\mathcal{PC}}_{1-\sigma;\, \varphi}\left( \mathcal{J},\,\mathbb{R}\right)$.
\end{itemize}
\end{itemize}
Then, the problem \eqref{p1} has at least one solution in the space ${\mathcal{PC}}_{{1-\sigma};\,\varphi}\left( J,\,\mathbb{R}\right).$
\end{theorem}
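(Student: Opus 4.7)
The plan is to recast the impulsive problem \eqref{p1} as a fixed-point equation and then invoke Schaefer's theorem (Theorem \ref{kf}). By Lemma \ref{fie}, a function $u\in\mathcal{PC}_{1-\sigma;\,\varphi}(J,\mathbb{R})$ solves \eqref{p1} if and only if $u=\mathcal{F}u$, where $\mathcal{F}$ is the operator defined by the right-hand side of \eqref{e14} with $h(s)=f(s,u(s))$. Thus it suffices to verify that $\mathcal{F}$ is completely continuous on $\mathcal{PC}_{1-\sigma;\,\varphi}(J,\mathbb{R})$ and that the a priori set $\mathbf{G}(\mathcal{F})$ is bounded.

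For continuity, if $u_n\to u$ in the weighted piecewise-continuous norm, I multiply $\mathcal{F}u_n(t)-\mathcal{F}u(t)$ by $(\varphi(t)-\varphi(a))^{1-\sigma}$; the factor $(\varphi(t)-\varphi(a))^{\sigma-1}$ cancels cleanly in the initial-data and impulse pieces, and the remaining $\varphi$-Riemann--Liouville integral is controlled via (H1)(ii) by a standard beta-function and dominated-convergence argument, while the impulsive jumps vanish by continuity of the $\mathcal{J}_k$'s together with (H2)(i). For compactness I apply the $\mathcal{PC}_{1-\sigma;\,\varphi}$-type Arzela--Ascoli lemma (Lemma \ref{pc}). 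Uniform boundedness on a ball $B_r$ follows from $|f(t,u(t))|\le|f(t,0)|+(\varphi(t)-\varphi(a))^{1-\sigma}|u(t)|$ (a consequence of (H1)(ii)), the bound $|\mathcal{J}_i(u(t_i^-))|\le\zeta_i$ from (H2)(ii), and the identity $\mathbf{I}_{a^+}^{\varrho;\,\varphi}(\varphi(t)-\varphi(a))^{\alpha-1}=\frac{\Gamma(\alpha)}{\Gamma(\alpha+\varrho)}(\varphi(t)-\varphi(a))^{\alpha+\varrho-1}$. Equicontinuity on each subinterval $(t_k,t_{k+1})$ is the main technical step: for $\tau_1<\tau_2$ in the same subinterval one splits the increment as
$$
\mathbf{I}_{a^+}^{\varrho;\,\varphi}h(\tau_2)-\mathbf{I}_{a^+}^{\varrho;\,\varphi}h(\tau_1)=\frac{1}{\Gamma(\varrho)}\int_a^{\tau_1}\varphi'(s)\bigl[(\varphi(\tau_2)-\varphi(s))^{\varrho-1}-(\varphi(\tau_1)-\varphi(s))^{\varrho-1}\bigr]h(s)\,ds+\frac{1}{\Gamma(\varrho)}\int_{\tau_1}^{\tau_2}\varphi'(s)(\varphi(\tau_2)-\varphi(s))^{\varrho-1}h(s)\,ds,
$$
and combines this with the analogous increments of the weighted-initial and weighted-impulse pieces; each term tends to zero uniformly in $u\in B_r$ as $\tau_2-\tau_1\to 0$. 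Pointwise relative compactness in $\mathbb{R}$ is automatic from the boundedness, so $\mathcal{F}$ is completely continuous.

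For the a priori bound, let $u=\lambda\,\mathcal{F}u$ with $\lambda\in(0,1)$. Multiplying by the weight $(\varphi(t)-\varphi(a))^{1-\sigma}$ and using (H1)(ii) together with (H2)(ii) yields an estimate of the form
$$
(\varphi(t)-\varphi(a))^{1-\sigma}|u(t)|\le M(t)+C\int_a^t\varphi'(s)(\varphi(t)-\varphi(s))^{\varrho-1}(\varphi(s)-\varphi(a))^{1-\sigma}|u(s)|\,ds+\sum_{a<t_k<t}\beta_k(\varphi(t_k^-)-\varphi(a))^{1-\sigma}|u(t_k^-)|,
$$
with a continuous function $M$ and positive constants $C,\beta_k$ depending only on the data (not on $\lambda$). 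Applying the generalized Gronwall inequality (Lemma \ref{JI}) to $\mathcal{U}(t):=(\varphi(t)-\varphi(a))^{1-\sigma}|u(t)|$ produces a Mittag--Leffler-type bound independent of $\lambda$, so $\mathbf{G}(\mathcal{F})$ is bounded and Schaefer's theorem delivers the desired fixed point.

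The step I expect to be hardest is the equicontinuity calculation on the first subinterval $[a,t_1]$, where the weight $(\varphi(t)-\varphi(a))^{1-\sigma}$ simultaneously interacts with the $(\sigma-1)$-power in the initial-data term and with the singularity of the $\varphi$-fractional kernel; extracting a modulus of continuity uniform in $u\in B_r$ as $t\downarrow a$ requires careful use of the $\varphi$-beta identity and monotonicity of $\varphi$. Once this is in hand, the impulse sum is handled termwise by repeating the argument on each $(t_k,t_{k+1}]$, using the uniform bound $|\mathcal{J}_i(u(t_i^-))|\le\zeta_i$ supplied by (H2)(ii).
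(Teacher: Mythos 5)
Your proposal follows essentially the same route as the paper: reformulate \eqref{p1} via the equivalent integral equation of Lemma \ref{fie}, prove the operator is completely continuous using the weighted estimates from $(H_1)$--$(H_2)$, the $\varphi$-beta identity and the $\mathcal{PC}_{1-\sigma;\,\varphi}$-type Arzela--Ascoli result (Lemma \ref{pc}), obtain the a priori bound on the set of $\lambda$-solutions through the impulsive generalized Gronwall inequality (Lemma \ref{JI}), and conclude with Schaefer's theorem. The steps you flag as delicate (equicontinuity near $t=a$ and $\lambda$-independence of the bound) are handled in the paper exactly as you outline, so the plan is sound.
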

\begin{proof}
Consider the operator $\mathbb{F}$ defined on ${\mathcal{PC}}_{{1-\sigma};\,\varphi}\left( J,\,\mathbb{R}\right)$ by
\begin{equation}\label{41}
(\mathbb{F}u)(t) =
 \frac{(\varphi(t)-\varphi(a))^{\sigma-1}}{\Gamma(\sigma)}\, \left(u_a +\sum_{a<t_k<t}\mathcal{J}_k(u(t_k^-))\right)+\mathbf{I}_{a^+}^{\varrho; \, \varphi}f(t,u(t)), ~\text{ $t \in J$ }.
\end{equation}
Then as proved in \cite{KKJK}, $\mathbb{F}$ is mapping from
 $ {\mathcal{PC}}_{{1-\sigma};\,\varphi}\left( J,\,\mathbb{R}\right)$ to itself. Further, our problem of finding the solution to \eqref{p1} is reduced to find a fixed point to the operator $\mathbb{F}$. Utilizing Schaefer fixed point theorem, we prove  that the operator $\mathbb{F}$ has fixed point, and the proof of same given in following four steps. 
 
 Step 1: $\mathbb{F}$ is continuous.\\
Let $\{u_n\}$ be a sequence such that $u_n\to u \, \text{in} ~{\mathcal{PC}}_{{1-\sigma};\,\varphi}\left( J,\,\mathbb{R}\right).$ Then for each $t\in J,$ 
\begin{align*}
&(\varphi(t)-\varphi(a))^{1-\sigma}\left| (\mathbb{F} u_n)(t)- (\mathbb{F}u)(t)\right| \\
&
=\left| \frac{1}{\Gamma(\sigma)}\, \left(u_a +\sum_{a<t_k<t}\mathcal{J}_k(u_n (t_k^-))\right)+ (\varphi(t)-\varphi(a))^{1-\sigma}\,\,\mathbf{I}_{a^+}^{\varrho; \, \varphi}f(t,u_n (t))\right.\\
&\qquad \left. - \frac{1}{\Gamma(\sigma)}\,\, \left(u_a +\sum_{a<t_k<t}\mathcal{J}_k(u(t_k^-))\right)-(\varphi(t)-\varphi(a))^{1-\sigma}\, \mathbf{I}_{a^+}^{\varrho; \, \varphi}f(t,u(t)) \right|\\
& \leq \frac{1}{\Gamma(\sigma)}\sum_{a<t_k<t}\left| \mathcal{J}_k(u_n (t_k^-))-\mathcal{J}_k(u(t_k^-))\right| \\
&\qquad + (\varphi(t)-\varphi(a))^{1-\sigma}\,\,\left|\mathbf{I}_{a^+}^{\varrho; \, \varphi}\left(f(t,u_n(t))-f(t,u(t)) \right)  \right|
\end{align*}
Since, $f$ and $\mathcal{J}_k, (k=1,\cdots, n)$ are continuous functions, we have
$$
\left| \mathcal{J}_k(u_n (t_k^-))-\mathcal{J}_k(u(t_k^-))\right|\to 0
$$
and
$$
\left|\left(f(t,u_n(t))-f(t,u(t)) \right) \right| \to 0 
$$
as $n\to \infty$.
Therefore,
$$
(\varphi(t)-\varphi(a))^{1-\sigma} \,\, \left| (\mathbb{F} u_n)(t)- (\mathbb{F}u)(t)\right| \to 0 \, \text{as}\, n\to \infty, 
$$ 
which gives

$$
 \left\| \mathbb{F} u_n- \mathbb{F}u\right\|_ {{\mathcal{PC}}_{{1-\sigma};\,\varphi}\left( J,\,\mathbb{R}\right)}\to 0 \, \text{as}\, n\to \infty. 
$$
This proves the operator $\mathbb{F}:{\mathcal{PC}}_{{1-\sigma};\,\varphi}\left( J,\,\mathbb{R}\right) \to {\mathcal{PC}}_{{1-\sigma};\,\varphi}\left( J,\,\mathbb{R}\right)$ 
is continuous.

Step 2: $\mathbb{F}$ maps bounded sets into bounded sets.\\
To prove this we shall show that for $\delta>0$ there exists $\eta >0$ such that for any $u\in \mathbb{D}_\delta = \left\lbrace u\in {\mathcal{PC}}_{{1-\sigma};\,\varphi}\left( J,\,\mathbb{R}\right): \left\|u \right\|_{{\mathcal{PC}}_{{1-\sigma};\,\varphi}\left( J,\,\mathbb{R}\right)} \leq \delta  \right\rbrace $ one has $\left\| \mathbb{F}u\right\|_{{\mathcal{PC}}_{{1-\sigma};\,\varphi}\left( J,\,\mathbb{R}\right)}\leq \eta.$ 
Let $\mathcal{M}^* = \sup_{s\in J} |f(s,0)|$ and $\zeta =\sum_{k=1}^{m}\zeta_k $. Then, using the hypotheses $(H_1)(ii)$ and $(H_2)(ii)$, for each $t \in J$, we have
\begin{align*}
&(\varphi(t)-\varphi(a))^{1-\sigma}|(\mathbb{F}u)(t)|\\
& \leq \frac{|u_a|}{\Gamma(\sigma)} \,  + \frac{1}{\Gamma(\sigma)}\, \sum_{a<t_k<t}\left| \mathcal{J}_k(u(t_k^-))\right| + (\varphi(t)-\varphi(a))^{1-\sigma}\left|\mathbf{I}_{a^+}^{\varrho; \, \varphi}f(t,u(t)) \right|\\
& \leq \frac{|u_a|}{\Gamma(\sigma)} \,  + \frac{1}{\Gamma(\sigma)}\, \sum_{a<t_k<t} \zeta_k +\frac{(\varphi(t)-\varphi(a))^{1-\sigma}}{\Gamma(\varrho)} \int_{a}^{t}\varphi^{'}(s)(\varphi(t)-\varphi(s))^{\varrho-1}\,\left|f(s, u(s))- f(s,0) \right| \, ds \\
&\qquad + \frac{(\varphi(t)-\varphi(a))^{1-\sigma}}{\Gamma(\varrho)} \int_{a}^{t}\varphi^{'}(s)(\varphi(t)-\varphi(s))^{\varrho-1}\,\left|f(s,0) \right| \, ds\\
& \leq \frac{|u_a|}{\Gamma(\sigma)} + \frac{\zeta}{\Gamma(\sigma)}+ \frac{(\varphi(t)-\varphi(a))^{1-\sigma}}{\Gamma(\varrho)} \int_{a}^{t}\varphi^{'}(s)(\varphi(t)-\varphi(s))^{\varrho-1} (\varphi(s)-\varphi(a))^{1-\sigma} \,\left|u(s) \right| \, ds \\
&\qquad + \frac{\mathcal{M}^* (\varphi(t)-\varphi(a))^{1-\sigma}}{\Gamma(\varrho)} \int_{a}^{t}\varphi^{'}(s)(\varphi(t)-\varphi(s))^{\varrho-1}\, ds \\
& \leq \frac{|u_a|+\zeta}{\Gamma(\sigma)} + \frac{(\varphi(t)-\varphi(a))^{1-\sigma}}{\Gamma(\varrho)} \left\|u \right\|_{{\mathcal{PC}}_{{1-\sigma};\,\varphi}\left( J,\,\mathbb{R}\right)}\int_{a}^{t}\varphi^{'}(s)(\varphi(t)-\varphi(s))^{\varrho-1}\, ds \\
&\qquad + \frac{\mathcal{M}^* (\varphi(t)-\varphi(a))^{1-\sigma}}{\Gamma(\varrho)} \int_{a}^{t}\varphi^{'}(s)(\varphi(t)-\varphi(s))^{\varrho-1}\, ds \\
&= \frac{|u_a|+\zeta}{\Gamma(\sigma)} + \frac{(\varphi(t)-\varphi(a))^{1-\sigma+\varrho}}{\Gamma(\varrho+1)} \left( \left\|u \right\|_{{\mathcal{PC}}_{{1-\sigma};\,\varphi}\left( J,\,\mathbb{R}\right)}+ \mathcal{M}^* \right)\\
& \leq \frac{|u_a|+\zeta}{\Gamma(\sigma)}+ \frac{(\varphi(T)-\varphi(a))^{1-\sigma+\varrho}}{\Gamma(\varrho+1)} \left(\delta +\mathcal{M}^* \right) \\
&:= \eta.           
\end{align*}
This gives  $\left\| \mathbb{F}u \right\|_{{\mathcal{PC}}_{{1-\sigma};\,\varphi}\left( J,\,\mathbb{R}\right)}\leq \eta.$ Thus the proof of the operator $\mathbb{F}$  maps bounded sets into bounded set is completed.

Step 3:  $\mathbb{F}$ is equicontinuous.\\
Let $u\in  C_{{1-\sigma};\,\varphi}\left( J,\,\mathbb{R}\right) $and $t_1 , t_2 \in J$ with $a< t_1 <t_2 < T,$ we get
\begin{align*}
&\left|(\mathbb{F}u)(t_2)-(\mathbb{F}u)(t_1) \right| \\
& = \left|\frac{(\varphi(t_2)-\varphi(a))^{\sigma-1}}{\Gamma(\sigma)}\, \left(u_a +\sum_{a<t_k<t_2}\mathcal{J}_k(u(t_k^-))\right)+\mathbf{I}_{a^+}^{\varrho; \, \varphi}f(t_2,u(t_2))\right.\\
&\qquad \left. - \frac{(\varphi(t_1)-\varphi(a))^{\sigma-1}}{\Gamma(\sigma)}\, \left(u_a +\sum_{a<t_k<t_1}\mathcal{J}_k(u(t_k^-))\right)- \mathbf{I}_{a^+}^{\varrho; \, \varphi}f(t_1,u(t_1)) \right| \\
& \leq |u_a|\, \left|\frac{(\varphi(t_2)-\varphi(a))^{\sigma-1}}{\Gamma(\sigma)}- \frac{(\varphi(t_1)-\varphi(a))^{\sigma-1}}{\Gamma(\sigma)}\right| \\
&\qquad + \left|\frac{(\varphi(t_2)-\varphi(a))^{\sigma-1}}{\Gamma(\sigma)}\sum_{a<t_k<t_2}\mathcal{J}_k(u(t_k^-))-\frac{(\varphi(t_1)-\varphi(a))^{\sigma-1}}{\Gamma(\sigma)}\sum_{a<t_k<t_1}\mathcal{J}_k(u(t_k^-)) \right| \\
&\qquad \qquad + \frac{1}{\Gamma(\varrho)}\int_{a}^{t_2}\varphi^{'}(s)(\varphi(t_2)-\varphi(s))^{\varrho-1}|f(s,u(s))|\, ds\\
&\qquad \qquad - \frac{1}{\Gamma(\varrho)}\int_{a}^{t_1}\varphi^{'}(s)(\varphi(t_1)-\varphi(s))^{\varrho-1}|f(s,u(s))|\, ds\\
& \leq |u_a|\, \left|\frac{(\varphi(t_2)-\varphi(a))^{\sigma-1}}{\Gamma(\sigma)}- \frac{(\varphi(t_1)-\varphi(a))^{\sigma-1}}{\Gamma(\sigma)}\right|+ \left|\frac{(\varphi(t_2)-\varphi(a))^{\sigma-1}}{\Gamma(\sigma)}\sum_{a<t_k< t_2 - t_1} \mathcal{J}_k(u(t_k^-) \right|\\
&\qquad + \frac{1}{\Gamma(\varrho)} \int_{a}^{t_2}\varphi^{'}(s)(\varphi(t_2)-\varphi(s))^{\varrho-1}\,(\varphi(s)-\varphi(a))^{\sigma-1}\,(\varphi(s)-\varphi(a))^{1-\sigma}|f(s,u(s))|\, ds\\
&\qquad- \frac{1}{\Gamma(\varrho)} \int_{a}^{t_1}\varphi^{'}(s)(\varphi(t_1)-\varphi(s))^{\varrho-1}\,(\varphi(s)-\varphi(a))^{\sigma-1}\,(\varphi(s)-\varphi(a))^{1-\sigma}|f(s,u(s))|\, ds\\  
& \leq |u_a|\, \left|\frac{(\varphi(t_2)-\varphi(a))^{\sigma-1}}{\Gamma(\sigma)}- \frac{(\varphi(t_1)-\varphi(a))^{\sigma-1}}{\Gamma(\sigma)}\right|+\frac{(\varphi(t_2)-\varphi(a))^{\sigma-1}}{\Gamma(\sigma)}\sum_{a<t_k< t_2 - t_1} \left|\mathcal{J}_k(u(t_k^-) \right|\\
&\qquad + \left\|f \right\|_{{\mathcal{PC}}_{{1-\sigma};\,\varphi}\left( J,\,\mathbb{R}\right)}\left(\mathbf{I}_{a^+}^{\varrho; \, \varphi}(\varphi(t_2)-\varphi(a))^{\sigma-1}- \mathbf{I}_{a^+}^{\varrho; \, \varphi}(\varphi(t_1)-\varphi(a))^{\sigma-1} \right)\\
&= \leq |u_a|\, \left|\frac{(\varphi(t_2)-\varphi(a))^{\sigma-1}}{\Gamma(\sigma)}- \frac{(\varphi(t_1)-\varphi(a))^{\sigma-1}}{\Gamma(\sigma)}\right|+\frac{(\varphi(t_2)-\varphi(a))^{\sigma-1}}{\Gamma(\sigma)}\sum_{a<t_k< t_2 - t_1} \left|\mathcal{J}_k(u(t_k^-) \right|\\
&\qquad + \left\|f \right\|_{{\mathcal{PC}}_{{1-\sigma};\,\varphi}\left( J,\,\mathbb{R}\right)} \left\lbrace\frac{\Gamma(\sigma)}{\Gamma(\varrho+\sigma)}\left[ (\varphi(t_2)-\varphi(a))^{\varrho+\sigma-1} - (\varphi(t_1)-\varphi(a))^{\varrho+\sigma-1}\right]  \right\rbrace .   
\end{align*}
Therefore
$$
\left|(\mathbb{F}u)(t_2)-(\mathbb{F}u)(t_1) \right|\to 0 ~\text{as}\,\, |t_1-t_2 | \to 0.
$$
This shows that $\mathbb{F}$ is equi-continuous on $J$. By utilizing ${\mathcal{PC}}_{1-\sigma;\, \varphi}$ type of Arzela-Ascoli Theorem \ref{pc}, the operator $\mathbb{F}$ is completely continuous. \\
Step 4: Finally, we show that the set 
$$
\mathcal{D}=\left\lbrace u\in {\mathcal{PC}}_{{1-\sigma};\,\varphi}\left( J,\,\mathbb{R}\right): u=\lambda \mathbb{F}u,\, \text{for some} \,\lambda \in (0,1)  \right\rbrace 
$$ is bounded.
Let any $u\in \mathcal{D}$. Then
\begin{align*}
|u(t)|=|\lambda\,\mathbb{F}u(t)| < |\mathbb{F}u(t)|.
\end{align*}
Let $\mathcal{N}^* = \max _{a<t_k<t}|\mathcal{J}_k(0)|.$ Then, using hypotheses $(H_1)(i)$ and $(H_2)(i)$, for each $t\in J,$ we have
\begin{align*}
&(\varphi(t)-\varphi(a))^{1-\sigma}|u(t)|\\
&< \frac{|u_a|}{\Gamma(\sigma)}\,+ \frac{1}{\Gamma(\sigma)} \sum_{a<t_k<t}\left| \mathcal{J}_k(u(t_k^-))\right|+ (\varphi(t)-\varphi(a))^{1-\sigma} \left| \mathbf{I}_{a^+}^{\varrho; \, \varphi}f(t,u(t))\right|\\
&\leq \frac{|u_a|}{\Gamma(\sigma)}+ \frac{1}{\Gamma(\sigma)}\sum_{a<t_k<t}\left| \mathcal{J}_k(u(t_k^-)-\mathcal{J}_k(o(t_k^-))\right| + \frac{1}{\Gamma(\sigma)}\sum_{a<t_k<t}\left|\mathcal{J}_k(o(t_k^-))\right| \\
& \qquad + (\varphi(t)-\varphi(a))^{1-\sigma} \left| \mathbf{I}_{a^+}^{\varrho; \, \varphi}\left( f(t,u(t))-f(t,0)\right) \right|\\
&\qquad\qquad +(\varphi(t)-\varphi(a))^{1-\sigma}\left| \mathbf{I}_{a^+}^{\varrho; \, \varphi}f(t,0) \right|\\
& \leq \frac{|u_a|}{\Gamma(\sigma)}+ \frac{1}{\Gamma(\sigma)}\sum_{a<t_k<t}(\varphi(t_k)-\varphi(a))^{1-\sigma}|u(t_k^-)| + \frac{1}{\Gamma(\sigma)}\sum_{a<t_k<t}|\mathcal{J}_k(0)|\\
&\qquad +\frac{(\varphi(t)-\varphi(a))^{1-\sigma}}{\Gamma(\varrho)} \int_{a}^{t}\varphi^{'}(s)(\varphi(t)-\varphi(s))^{\varrho-1}\,(\varphi(s)-\varphi(a))^{1-\sigma}\left|u(s) \right| \, ds\\
&\qquad + \frac{(\varphi(t)-\varphi(a))^{1-\sigma}}{\Gamma(\varrho)} \int_{a}^{t}\varphi^{'}(s)(\varphi(t)-\varphi(s))^{\varrho-1}\,\left|f(s,0) \right| \, ds\\
&\leq \left( \frac{|u_a|}{\Gamma(\sigma)}+\frac{m\, \mathcal{N}^*}{\Gamma(\sigma)}+ \frac{\mathcal{M}^*(\varphi(T)-\varphi(a))^{1-\sigma+\varrho}}{\Gamma(\varrho +1)} \right) \\
& \qquad + \frac{(\varphi(t)-\varphi(a))^{1-\sigma}}{\Gamma(\varrho)} \int_{a}^{t}\varphi^{'}(s)(\varphi(t)-\varphi(s))^{\varrho-1}\,(\varphi(s)-\varphi(a))^{1-\sigma}\left|u(s) \right| \, ds\\
&\qquad + \frac{1}{\Gamma(\sigma)}\sum_{a<t_k<t}(\varphi(t_k)-\varphi(a))^{1-\sigma}|u(t_k^-)|.
\end{align*}
Now applying the Lemma \ref{JI} with
\begin{align*}
\mathcal{U}(t)&=(\varphi(t)-\varphi(a))^{1-\sigma}\left|u(t) \right|,\\
\mathcal{V}(t)&= \frac{|u_a|}{\Gamma(\sigma)}+\frac{m\, \mathcal{N}^*}{\Gamma(\sigma)}+ \frac{\mathcal{M}^*(\varphi(T)-\varphi(a))^{1-\sigma+\varrho}}{\Gamma(\varrho +1)},\\
\mathbf{g}(t)&=\frac{(\varphi(t)-\varphi(a))^{1-\sigma}}{\Gamma(\varrho)},\\
\beta_k&=\frac{1}{\Gamma(\sigma)},
\end{align*}
we obtain
\begin{align*}
&(\varphi(t)-\varphi(a))^{1-\sigma}|u(t)| \\
&\leq \left( \frac{|u_a|}{\Gamma(\sigma)}+\frac{m\, \mathcal{N}^*}{\Gamma(\sigma)}+ \frac{\mathcal{M}^*(\varphi(T)-\varphi(a))^{1-\sigma+\varrho}}{\Gamma(\varrho +1)} \right)\\
&\qquad \times \left[\prod_{i=1}^{k} \left\lbrace 1+\frac{1}{\Gamma(\sigma)} E_\varrho\left( \frac{(\varphi(t)-\varphi(a))^{1-\sigma}}{\Gamma(\varrho)}\Gamma(\varrho)(\varphi(t_i)-\varphi(a))^\varrho\right) \right\rbrace  \right] \\
& \qquad \times E_\varrho\left( \frac{(\varphi(t)-\varphi(a))^{1-\sigma}}{\Gamma(\varrho)}\Gamma(\varrho)(\varphi(t)-\varphi(a))^\varrho\right).
\end{align*}
 Since $\varphi$ is increasing,  $(\varphi(t_i)-\varphi(a))^\varrho\leq(\varphi(t)-\varphi(a))^\varrho$, ~ $k=1,2,\cdots,m$. Therefore, we have 
 \begin{align*}
(\varphi(t)-\varphi(a))^{1-\sigma}\left|u(t) \right|
 &\leq \left( \frac{|u_a|}{\Gamma(\sigma)}+\frac{m\, \mathcal{N}^*}{\Gamma(\sigma)}+ \frac{\mathcal{M}^*(\varphi(T)-\varphi(a))^{1-\sigma+\varrho}}{\Gamma(\varrho +1)} \right)\\
 &\qquad \times \left(1+\frac{1}{\Gamma(\sigma)} E_\varrho ((\varphi(T)-\varphi(a))^{1-\sigma+\varrho}) \right)^m\\
 &\qquad \times E_\varrho\left((\varphi(T)-\varphi(a))^{1-\sigma+\varrho} \right) 
 \end{align*}
 Therefore for each $t\in J$, we have
  $$(\varphi(t)-\varphi(a))^{1-\sigma}\left|u(t) \right| \leq \left( \frac{|u_a|}{\Gamma(\sigma)}+\frac{m\, \mathcal{N}^*}{\Gamma(\sigma)}+ \frac{\mathcal{M}^*(\varphi(T)-\varphi(a))^{1-\sigma+\varrho}}{\Gamma(\varrho +1)} \right)\, \mathcal{A}_{m,\varrho},$$
  where
\begin{equation} \label{32}
\mathcal{A}_{m,\varrho}= \left(1+\frac{1}{\Gamma(\sigma)}E_\varrho ((\varphi(T)-\varphi(a))^{1-\sigma+\varrho}) \right)^m
    E_\varrho\left((\varphi(T)-\varphi(a))^{1-\sigma+\varrho} \right).
\end{equation}  
Thus
 $$
 \left\|u \right\|_{{\mathcal{PC}}_{{1-\sigma};\,\varphi}\left( J,\,\mathbb{R}\right)} \leq  \mathcal{A}_{m,\varrho}\,\left( \frac{|u_a|}{\Gamma(\sigma)}+\frac{m\, \mathcal{N}^*}{\Gamma(\sigma)}+ \frac{\mathcal{M}^*(\varphi(T)-\varphi(a))^{1-\sigma+\varrho}}{\Gamma(\varrho +1)} \right). 
 $$
 This shows that the set $\mathcal{D}$ is bounded subset of ${\mathcal{PC}}_{{1-\sigma};\,\varphi}\left( J,\,\mathbb{R}\right).$ 
 Hence by Schaefer's fixed point theorem $\mathbb{F}$ has at least one fixed point in  ${\mathcal{PC}}_{{1-\sigma};\,\varphi}\left( J,\,\mathbb{R}\right)$ which is the solution of problem \eqref{p1}.
\end{proof}

\section{Continuous dependence of solution}
This section deals with the continuous dependence of solutions of the problem \eqref{p1} on initial conditions and the functions involved in \eqref{p1} on the right hand sides.
\subsection{Continuous Dependence on initial conditions} 

\begin{theorem} Suppose that the functions $f:(a,T] \to \mathbb{R}$ and $\mathcal{J}_k:\mathbb{R} \to \mathbb{R}$ satisfy the hypotheses $(H_1)(ii)$ and $(H_2)(i)$ respectively. Let $u,v \in {\mathcal{PC}}_{{1-\sigma};\,\varphi}\left( J,\,\mathbb{R}\right) $ are the solutions of the problem 
\begin{align} 
\begin{cases}
 ^H \mathbf{D}^{\varrho,\, \nu; \, \varphi}_{a^+}u(t)=f(t, u(t)),~t \in J-\{t_1, t_2,\cdots ,t_m\},\\
\Delta \mathbf{I}_{a^+}^{1-\sigma; \, \varphi}u(t_k)= \mathcal{J}_k(u(t_k^-)),  ~k = 1,2,\cdots,m, \label{p2} 
\end{cases}
\end{align} 
corresponding to ~$\mathbf{I}_{a^+}^{1-\sigma; \, \varphi}u(a)= u_a$
and ~$\mathbf{I}_{a^+}^{1-\sigma; \, \varphi}u(a)= v_a $ respectively. Then,
\begin{align} \label{53}
\left\|u-v \right\|_{{\mathcal{PC}}_{{1-\sigma};\,\varphi}\left( J,\,\mathbb{R}\right)} &\leq \frac{|u_a- v_a|}{\Gamma(\sigma)} \left(1+\frac{1}{\Gamma(\sigma)} E_\varrho (\varphi(T)-\varphi(a))^{1-\sigma+\varrho} \right)^m
  \nonumber\\
    & \qquad \times E_\varrho\left((\varphi(T)-\varphi(a))^{1-\sigma+\varrho} \right).  
\end{align}

\end{theorem}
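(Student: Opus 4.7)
The plan is to mirror the strategy used in Step 4 of the proof of Theorem \ref{exi}: reduce the problem to an integral inequality, apply the Lipschitz hypotheses $(H_1)(ii)$ and $(H_2)(i)$, and then invoke the generalized impulsive Gronwall inequality of Lemma \ref{JI}. First I would invoke Lemma \ref{fie} to write both $u$ and $v$ in the integral form \eqref{e14}. Subtracting the two representations, the $(\varphi(t)-\varphi(a))^{\sigma-1}u_a/\Gamma(\sigma)$ terms contribute $\frac{(\varphi(t)-\varphi(a))^{\sigma-1}}{\Gamma(\sigma)}(u_a - v_a)$, while the impulse sums and the fractional integrals combine through the differences $\mathcal{J}_k(u(t_k^-))-\mathcal{J}_k(v(t_k^-))$ and $f(s,u(s))-f(s,v(s))$.

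Next I would multiply the resulting identity by $(\varphi(t)-\varphi(a))^{1-\sigma}$ so that the left-hand side becomes the weighted quantity used in the norm. Using $(H_1)(ii)$ to bound the integrand by $(\varphi(s)-\varphi(a))^{1-\sigma}|u(s)-v(s)|$ and $(H_2)(i)$ to bound each impulse term by $(\varphi(t_k)-\varphi(a))^{1-\sigma}|u(t_k^-)-v(t_k^-)|$, I obtain an inequality of the form
\begin{align*}
(\varphi(t)-\varphi(a))^{1-\sigma}|u(t)-v(t)| &\leq \frac{|u_a-v_a|}{\Gamma(\sigma)} + \frac{1}{\Gamma(\sigma)}\sum_{a<t_k<t}(\varphi(t_k)-\varphi(a))^{1-\sigma}|u(t_k^-)-v(t_k^-)| \\
&\quad + \frac{(\varphi(t)-\varphi(a))^{1-\sigma}}{\Gamma(\varrho)}\int_a^t \varphi'(s)(\varphi(t)-\varphi(s))^{\varrho-1}(\varphi(s)-\varphi(a))^{1-\sigma}|u(s)-v(s)|\,ds,
\end{align*}
which fits exactly the hypothesis of Lemma \ref{JI}.

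I would then set $\mathcal{U}(t)=(\varphi(t)-\varphi(a))^{1-\sigma}|u(t)-v(t)|$, $\mathcal{V}(t)=|u_a-v_a|/\Gamma(\sigma)$, $\mathbf{g}(t)=(\varphi(t)-\varphi(a))^{1-\sigma}/\Gamma(\varrho)$, and $\beta_k=1/\Gamma(\sigma)$, and apply Lemma \ref{JI} directly. The resulting bound carries a product of $m$ factors $1+\frac{1}{\Gamma(\sigma)}E_\varrho(\cdot)$ together with an outer Mittag-Leffler factor. Since $\varphi$ is increasing, each $(\varphi(t_i)-\varphi(a))^\varrho$ can be replaced by $(\varphi(T)-\varphi(a))^\varrho$ and the full product bounded by its $m$-th power, exactly as in Step 4 of the existence proof, yielding the stated constant $(1+\frac{1}{\Gamma(\sigma)}E_\varrho((\varphi(T)-\varphi(a))^{1-\sigma+\varrho}))^m \cdot E_\varrho((\varphi(T)-\varphi(a))^{1-\sigma+\varrho})$. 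Taking the supremum over $t\in J$ gives \eqref{53}.

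The main obstacle is essentially bookkeeping: tracking the weight $(\varphi(t)-\varphi(a))^{1-\sigma}$ through each term so that the Lipschitz bounds produce factors that match the $\mathbf{g}$ and $\beta_k$ required by Lemma \ref{JI}, and verifying that the uniform bound $(\varphi(t_i)-\varphi(a))^\varrho \leq (\varphi(T)-\varphi(a))^\varrho$ correctly collapses the product. No new inequalities beyond those already used in Theorem \ref{exi} are needed, so the argument is essentially a direct specialization of the Gronwall step there to the difference $u-v$ with right-hand source $|u_a-v_a|/\Gamma(\sigma)$.
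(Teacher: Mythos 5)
Your proposal is correct and follows essentially the same route as the paper: write $u$ and $v$ via Lemma \ref{fie}, subtract, weight by $(\varphi(t)-\varphi(a))^{1-\sigma}$, apply $(H_1)(ii)$ and $(H_2)(i)$ to reach the impulsive integral inequality, and then invoke Lemma \ref{JI} with the same choices of $\mathcal{U}$, $\mathcal{V}$, $\mathbf{g}$, $\beta_k$ before collapsing the product using the monotonicity of $\varphi$. No gaps; this matches the paper's argument.
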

\begin{proof}
Let $u,v \in {\mathcal{PC}}_{{1-\sigma};\,\varphi}\left( J,\,\mathbb{R}\right) $ are the solutions of the problem \eqref{p2} corresponding to ~$\mathbf{I}_{a^+}^{1-\sigma; \, \varphi}v(a)= u_a$
and ~$\mathbf{I}_{a^+}^{1-\sigma; \, \varphi}u(a)= v_a $ respectively. Then
in the view of lemma \ref{fie}, we have
$$
u(t)= \frac{(\varphi(t)-\varphi(a))^{\sigma-1}}{\Gamma(\sigma)}\, \left(u_a +\sum_{a<t_k<t}\mathcal{J}_k(u(t_k^-))\right)+\mathbf{I}_{a^+}^{\varrho; \, \varphi}f(t,u(t))
$$ 
and
$$
v(t)=  \frac{(\varphi(t)-\varphi(a))^{\sigma-1}}{\Gamma(\sigma)}\, \left(v_a +\sum_{a<t_k<t}\mathcal{J}_k(v(t_k^-))\right)+\mathbf{I}_{a^+}^{\varrho; \, \varphi}f(t,v(t)).
$$
Using the hypotheses $(H_1)(ii)$, $(H_2)(i)$ for any $t \in J$, we have
\begin{align*}
&(\varphi(t)-\varphi(a))^{1-\sigma}|u(t)-v(t)|\\
& \leq \frac{|u_a- v_a|}{\Gamma(\sigma)} + \frac{1}{\Gamma(\sigma)}\sum_{a<t_k<t}(\varphi(t_k)-\varphi(a))^{1-\sigma}\left|u(t_k^-)-v(t_k^-) \right|\\
&\qquad +\frac{(\varphi(t)-\varphi(a))^{1-\sigma}}{\Gamma(\varrho)}\int_{a}^{t}\varphi^{'}(s)(\varphi(t)-\varphi(s))^{\varrho-1}\,(\varphi(s)-\varphi(a))^{1-\sigma}|u(s)-v(s)|\, ds.
\end{align*}
By application of Lemma \eqref{JI} to the above inequality with $\mathcal{U}(t)$,
$\mathbf{g}(t)$ and $\beta_k$ as given in the proof of Theorem \ref{TUH} and 
$
\mathcal{V}(t)= \frac{|u_a- v_a|}{\Gamma(\sigma)},
$
we obtain 
\begin{align*}
&(\varphi(t)-\varphi(a))^{1-\sigma}|u(t)-v(t)|\\
& \leq \frac{|u_a- v_a|}{\Gamma(\sigma)}\,\left(1+\frac{1}{\Gamma(\sigma)} E_\varrho (\varphi(T)-\varphi(a))^{1-\sigma+\varrho} \right)^m
    E_\varrho\left((\varphi(T)-\varphi(a))^{1-\sigma+\varrho} \right),t \in J.
\end{align*} 
This gives the inequality \eqref{53}.
\end{proof}
\begin{rem} 
The inequality \eqref{53} gives dependence  of solution  of  impulsive $\varphi$--HFDE \eqref{p1} on initial conditions. Further, for $u_a=v_a$ the inequality \eqref{53} gives uniqueness of the solution also.
\end{rem}

\subsection{Continuous dependence on the functions} 
Now consider the following impulsive $\varphi$- HFDE
\begin{align}\label{p3} 
\begin{cases}
 ^H \mathbf{D}^{\varrho,\, \nu; \, \varphi}_{a^+}v(t)=\tilde{f}(t, v(t)),~t \in J -\{t_1, t_2,\cdots ,t_m\},\\
\Delta \mathbf{I}_{a^+}^{1-\sigma; \, \varphi}v(t_k)= \tilde{\mathcal{J}}_k(v(t_k^-)),  ~k = 1,2,\cdots,m, \\
 \mathbf{I}_{a^+}^{1-\sigma; \, \varphi}v(a)= v_a \in \mathbb{R},
\end{cases}
\end{align}
where $\tilde{f}:(a,T]\to \mathbb{R}$ and $\tilde{\mathcal{J}}_k: \mathbb{R}\to \mathbb{R}$ are the functions other than $f$ and $\mathcal{J}_k$ specified in the problem \eqref{p1}.
\begin{theorem}
Suppose that the functions $f:(a,T] \to \mathbb{R}$ and $\mathcal{J}_k:\mathbb{R} \to \mathbb{R}$ satisfy the hypotheses $(H_1)(ii)$ and $(H_2)(i)$ respectively.  Further, suppose that there is a constant $\delta_a >0, ~\varepsilon_f >0,~ \varepsilon_\mathcal{J}>0$ such that
\begin{align*}
|u_a-v_a|&\leq \delta_a,\\ 
|f(t,u(t))-\tilde{f}(t,u(t))|&\leq \varepsilon_f ,\\
|\mathcal{J}_k(u(t_k^-))-\tilde{\mathcal{J}}_k(u(t_k^-))|&\leq \varepsilon_\mathcal{J}.
\end{align*}
Then, the solution $u\in {\mathcal{PC}}_{{1-\sigma};\,\varphi}\left( J,\,\mathbb{R}\right) $ of  \eqref{p1} and the solution $v\in {\mathcal{PC}}_{{1-\sigma};\,\varphi}\left( J,\,\mathbb{R}\right) $ of \eqref{p3} satisfy the inequality
\begin{align} \label{63}
&\left\|u-v \right\|_{{\mathcal{PC}}_{{1-\sigma};\,\varphi}\left( J,\,\mathbb{R}\right)}\\
&\leq\left(\frac{1}{\Gamma(\sigma)}\delta_a\,
+\frac{m}{\Gamma(\sigma)} \, \,\varepsilon_\mathcal{J}\,+ \frac{(\varphi(T)-\varphi(a))^{1-\sigma+\varrho}}{\Gamma(\varrho +1)}\varepsilon_f \right) \nonumber \\ 
 &\qquad \times \left(1+\frac{1}{\Gamma(\sigma)} E_\varrho (\varphi(T)-\varphi(a))^{1-\sigma+\varrho} \right)^m
    E_\varrho\left((\varphi(T)-\varphi(a))^{1-\sigma+\varrho} \right). 
\end{align}
\end{theorem}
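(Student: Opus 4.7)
The plan is to mirror the argument used in the preceding theorem on dependence on initial data, but now absorbing three perturbation terms instead of one. First I would write $u$ and $v$ through the integral representation of Lemma \ref{fie}: $u$ satisfies \eqref{e14} with data $u_a$, $\mathcal{J}_k$, $f$, while $v$ satisfies the analogous identity with data $v_a$, $\tilde{\mathcal{J}}_k$, $\tilde{f}$. Subtracting, multiplying by $(\varphi(t)-\varphi(a))^{1-\sigma}$, and applying the triangle inequality on each block (initial data, jumps, fractional integral of the right-hand side) is standard.

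The key step is to introduce and cancel the cross terms so the hypotheses become applicable. For the nonlinearity I would write
\[
f(s,u(s))-\tilde f(s,v(s))=\bigl[f(s,u(s))-f(s,v(s))\bigr]+\bigl[f(s,v(s))-\tilde f(s,v(s))\bigr],
\]
so that $(H_1)(ii)$ controls the first bracket by $(\varphi(s)-\varphi(a))^{1-\sigma}|u(s)-v(s)|$, while the second bracket is bounded by $\varepsilon_f$; integration against $(\varphi(t)-\varphi(s))^{\varrho-1}\varphi'(s)$ and multiplication by $(\varphi(t)-\varphi(a))^{1-\sigma}$ yields the contribution $\tfrac{\varepsilon_f(\varphi(T)-\varphi(a))^{1-\sigma+\varrho}}{\Gamma(\varrho+1)}$ in $\mathcal{V}(t)$ plus the usual convolution term in $|u-v|$. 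I would treat the impulses by the analogous splitting
\[
\mathcal{J}_k(u(t_k^-))-\tilde{\mathcal{J}}_k(v(t_k^-))=\bigl[\mathcal{J}_k(u(t_k^-))-\mathcal{J}_k(v(t_k^-))\bigr]+\bigl[\mathcal{J}_k(v(t_k^-))-\tilde{\mathcal{J}}_k(v(t_k^-))\bigr],
\]
using $(H_2)(i)$ on the first bracket and the hypothesis $|\mathcal{J}_k-\tilde{\mathcal{J}}_k|\le \varepsilon_{\mathcal{J}}$ on the second, and summing the at most $m$ impulse times gives the $\tfrac{m\varepsilon_{\mathcal{J}}}{\Gamma(\sigma)}$ contribution to $\mathcal{V}(t)$.

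Collecting these estimates reduces the problem to the integral inequality
\[
\mathcal{U}(t)\le \mathcal{V}(t)+\mathbf{g}(t)\int_a^t\varphi'(s)(\varphi(t)-\varphi(s))^{\varrho-1}\mathcal{U}(s)\,ds+\sum_{a<t_k<t}\beta_k\,\mathcal{U}(t_k^-),
\]
with $\mathcal{U}(t)=(\varphi(t)-\varphi(a))^{1-\sigma}|u(t)-v(t)|$, $\mathbf{g}(t)=\tfrac{(\varphi(t)-\varphi(a))^{1-\sigma}}{\Gamma(\varrho)}$, $\beta_k=\tfrac{1}{\Gamma(\sigma)}$, and
\[
\mathcal{V}(t)=\frac{\delta_a}{\Gamma(\sigma)}+\frac{m\,\varepsilon_{\mathcal{J}}}{\Gamma(\sigma)}+\frac{(\varphi(T)-\varphi(a))^{1-\sigma+\varrho}}{\Gamma(\varrho+1)}\varepsilon_f .
\]
I would then apply the impulsive generalized Gronwall inequality, Lemma \ref{JI}, exactly as in Step 4 of Theorem \ref{exi} and in the previous dependence theorem, and use the monotonicity of $\varphi$ (so that $(\varphi(t_i)-\varphi(a))^{\varrho}\le (\varphi(T)-\varphi(a))^{\varrho}$) to replace each Mittag--Leffler factor $E_\varrho(\mathbf{g}(t)\Gamma(\varrho)(\varphi(t_i)-\varphi(a))^{\varrho})$ by $E_\varrho((\varphi(T)-\varphi(a))^{1-\sigma+\varrho})$; the product of $m$ such factors contributes the $(1+\tfrac{1}{\Gamma(\sigma)}E_\varrho(\cdots))^m$ term, and the final factor produces the outer $E_\varrho$. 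Passing to the supremum over $t\in J$ then yields the estimate \eqref{63}.

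The routine parts are the cross-term splittings and the evaluation of the $\varphi$--Riemann--Liouville integral of a constant; the only genuine technical point is to make sure the impulse splitting is done against $v(t_k^-)$ (not $u(t_k^-)$) so that the hypothesis $|\mathcal{J}_k-\tilde{\mathcal{J}}_k|\le\varepsilon_{\mathcal{J}}$ and the Lipschitz bound $(H_2)(i)$ can both be invoked without requiring any growth condition on $\tilde{\mathcal{J}}_k$, and then to feed the resulting bound into Lemma \ref{JI} with $\mathcal{V}(t)$ constant in $t$, which is what allows the factor $\mathcal{V}$ to come out cleanly in front.
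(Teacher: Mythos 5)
Your proposal is correct and follows essentially the same route as the paper's proof: both write $u,v$ via the integral representation of Lemma \ref{fie}, split the cross terms against $v$ (so that $(H_1)(ii)$, $(H_2)(i)$ handle the Lipschitz parts and the perturbation bounds $\varepsilon_f,\varepsilon_{\mathcal{J}},\delta_a$ handle the rest), and then apply Lemma \ref{JI} with the same $\mathcal{U}$, $\mathbf{g}$, $\beta_k$ and a $\mathcal{V}$ equal to the bracketed constant, finishing with the monotonicity of $\varphi$ to bound the Mittag--Leffler factors by their values at $T$. The only cosmetic difference is that the paper keeps $(\varphi(t)-\varphi(a))^{1-\sigma+\varrho}$ in $\mathcal{V}(t)$ and replaces it by the $T$-value after Gronwall, whereas you do so beforehand.
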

\begin{proof}
Let $u\in {\mathcal{PC}}_{{1-\sigma};\,\varphi}\left( J,\,\mathbb{R}\right) $ be the  solution of  \eqref{p1} and $v\in {\mathcal{PC}}_{{1-\sigma};\,\varphi}\left( J,\,\mathbb{R}\right) $ be the  solution of \eqref{p3}. Then, their corresponding equivalent integral equations are given by respectively
$$
u(t)= \frac{(\varphi(t)-\varphi(a))^{\sigma-1}}{\Gamma(\sigma)}\, \left(u_a +\sum_{a<t_k<t}\mathcal{J}_k(u(t_k^-))\right)+\mathbf{I}_{a^+}^{\varrho; \, \varphi}f(t,u(t))
$$ 
and
$$
v(t)=  \frac{(\varphi(t)-\varphi(a))^{\sigma-1}}{\Gamma(\sigma)}\, \left(v_a +\sum_{a<t_k<t}\tilde{\mathcal{J}_k}(v(t_k^-))\right)+\mathbf{I}_{a^+}^{\varrho; \, \varphi}\tilde{f}(t,v(t)).
$$
Using the hypotheses $(H_1)(ii)$ and $(H_2)(i)$, for any $t\in J$,  we have
\begin{align*}
|u(t)-v(t)|& \leq \frac{(\varphi(t)-\varphi(a))^{\sigma-1}}{\Gamma(\sigma)}\,\delta_a + \frac{(\varphi(t)-\varphi(a))^{\sigma-1}}{\Gamma(\sigma)}\sum_{a<t_k<t}\left|\mathcal{J}_k(u(t_k^-))-\mathcal{J}_k(v(t_k^-)) \right|\\
&\qquad + \frac{(\varphi(t)-\varphi(a))^{\sigma-1}}{\Gamma(\sigma)}\sum_{a<t_k<t}\left|\mathcal{J}_k(v(t_k^-))-\tilde{\mathcal{J}}_k(v(t_k^-)) \right|\\ 
&\qquad + \left|\mathbf{I}_{a^+}^{\varrho; \, \varphi}\left( f(t,u(t))-f(t,v(t))\right)  \right|+ \left|\mathbf{I}_{a^+}^{\varrho; \, \varphi}\left( f(t,v(t))-\tilde{f}(t,v(t))\right)  \right| \\
&\leq  \left( \frac{(\varphi(t)-\varphi(a))^{\sigma-1}}{\Gamma(\sigma)}\,\delta_a+\frac{(\varphi(t)-\varphi(a))^{\sigma-1}}{\Gamma(\sigma)} \, m\,\varepsilon_\mathcal{J} + \frac{(\varphi(t)-\varphi(a))^{\varrho}}{\Gamma(\varrho +1)}\varepsilon_f\right) \\
&\qquad + \frac{(\varphi(t)-\varphi(a))^{\sigma-1}}{\Gamma(\sigma)}\, \sum_{a<t_k<t}(\varphi(t_k^-)-\varphi(a))^{1-\sigma}\left|u(t_k^-)-v(t_k^-)\right| \\
&\qquad + \frac{1}{\Gamma(\varrho)}\int_{a}^{t}\varphi^{'}(s)(\varphi(t)-\varphi(s))^{\varrho-1}\,(\varphi(s)-\varphi(a))^{1-\sigma}|u(s)-v(s)|\, ds.
\end{align*}
Thus
\begin{align*}
&(\varphi(t)-\varphi(a))^{1-\sigma}|u(s)-v(s)|\\
&\leq \left(\frac{1}{\Gamma(\sigma)}\,\delta_a+\frac{m}{\Gamma(\sigma)} \, \,\varepsilon_\mathcal{J}\,+ \frac{(\varphi(t)-\varphi(a))^{1-\sigma+\varrho}}{\Gamma(\varrho +1)}\varepsilon_f \right)\\
&\qquad + \frac{1}{\Gamma(\sigma)}\, \sum_{a<t_k<t}(\varphi(t_k^-)-\varphi(a))^{1-\sigma}\left|u(t_k^-)-v(t_k^-)\right| \\
&\qquad + \frac{(\varphi(t)-\varphi(a))^{1-\sigma}}{\Gamma(\varrho)}\int_{a}^{t}\varphi^{'}(s)(\varphi(t)-\varphi(s))^{\varrho-1}\,(\varphi(s)-\varphi(a))^{1-\sigma}|u(s)-v(s)|\, ds, ~t \in J
\end{align*}
Applying lemma \ref{JI} to the above inequality with $\mathcal{U}(t)$,
$\mathbf{g}(t)$ and $\beta_k$ as given in the proof of Theorem \ref{TUH} and 
$$
\mathcal{V}(t)= \left(\frac{1}{\Gamma(\sigma)}\,\delta_a+\frac{m}{\Gamma(\sigma)} \, \,\varepsilon_\mathcal{J}\,+ \frac{(\varphi(t)-\varphi(a))^{1-\sigma+\varrho}}{\Gamma(\varrho +1)}\varepsilon_f \right),
$$
we obtain
\begin{align*}
&(\varphi(t)-\varphi(a))^{1-\sigma}|u(s)-v(s)|\\
&\leq \left(\frac{\delta_a}{\Gamma(\sigma)}\,+\frac{1}{\Gamma(\sigma)} \, m\,\varepsilon_\mathcal{J}\,+ \frac{(\varphi(T)-\varphi(a))^{1-\sigma+\varrho}}{\Gamma(\varrho +1)}\varepsilon_f \right)\\
&\qquad \times \left(1+\frac{1}{\Gamma(\sigma)} E_\varrho (\varphi(T)-\varphi(a))^{1-\sigma+\varrho} \right)^m
    E_\varrho\left((\varphi(T)-\varphi(a))^{1-\sigma+\varrho} \right), ~t \in J.
\end{align*}
This gives the inequality \eqref{63}.
\end{proof}
\begin{rem} $ $
\begin{itemize}
\item [(1)]Inequality \eqref{63} shows that the solution $u$ of \eqref{p1} depends continuously on the functions involved in right hand side of \eqref{p1}.
\item [(2)] If $\delta_a \to 0$, $\varepsilon_\mathcal{J} \to 0$, $\varepsilon_f \to 0 $, then we get the uniqueness of the solution of the problem \eqref{p1}.
\item [(3)] If $\varepsilon_f=\varepsilon_\mathcal{J} = 0$, then the inequality \eqref{63} gives the dependence of solution of the problem \eqref{p1} on the initial condition.
\end{itemize}
\end{rem}
\subsection{Continuous dependence on the order of $\varphi$-Hilfer derivative} 
Now consider the following impulsive $\varphi$- HFDE
\begin{align}\label{p4} 
\begin{cases}
 ^H \mathbf{D}^{\varrho-\delta ,\, \nu; \, \varphi}_{a^+}v(t)=f(t, v(t)),~t \in J -\{t_1, t_2,\cdots ,t_m\},\\
\Delta \mathbf{I}_{a^+}^{1-\sigma^{*}; \, \varphi}v(t_k)= {\mathcal{J}}_k(v(t_k^-)),  ~k = 1,2,\cdots,m, \\
 \mathbf{I}_{a^+}^{1-\sigma^{*}; \, \varphi}v(a)= v_a \in \mathbb{R},
\end{cases}
\end{align}
where $f$ and $\mathcal{J}_k$ specified in the problem \eqref{p1} and $\sigma^{*}=\sigma+\delta(\nu-1)$, $\delta>0$.
\begin{theorem}
Suppose that the functions $f:(a,T] \to \mathbb{R}$ and $\mathcal{J}_k:\mathbb{R} \to \mathbb{R}$ satisfy the hypotheses $(H_1)$ and $(H_2)$ respectively.  
Then, the solution $u\in {\mathcal{PC}}_{{1-\sigma};\,\varphi}\left( J,\,\mathbb{R}\right) $ of  \eqref{p1} and the solution $v\in {\mathcal{PC}}_{{1-\sigma^{*}};\,\varphi}\left( J,\,\mathbb{R}\right) $ of \eqref{p4} satisfy the inequality
\begin{align} \label{64}
&(\varphi(t)-\varphi(a))^{1-\sigma}|u(t)-v(t)| \nonumber\\
&\leq \mathcal{B}(t)\,\left(1+\frac{1}{\Gamma(\sigma)} E_\varrho (\varphi(T)-\varphi(a))^{1-\sigma+\varrho} \right)^m
    E_\varrho\left((\varphi(T)-\varphi(a))^{1-\sigma+\varrho} \right), 
\end{align}
where
\begin{align}\label{b}
\mathfrak{B}(t)=& \left| \frac{u_a}{\Gamma(\sigma)}-\frac{v_a (\varphi(t)-\varphi(a))^{\delta(\beta-1))}}{\Gamma(\sigma+\delta(\beta-1))}\right|+ \zeta\,\left| \frac{ 1}{\Gamma(\sigma)}- \frac{ (\varphi(t)-\varphi(a))^{(\delta(\beta-1))}}{\Gamma(\sigma+\delta(\beta-1))}\right| \nonumber\\
&\qquad+ \|f\|_{{\mathcal{PC}}_{{1-\sigma};\,\varphi}} \left\lbrace \frac{\Gamma(\sigma)(\varphi(t)-\varphi(a))^{\varrho}}{\Gamma(\sigma+\varrho)}- \frac{\Gamma(\sigma)(\varphi(t)-\varphi(a))^{\varrho-\delta}}{\Gamma(\sigma+\varrho-\delta)} \right\rbrace,
\end{align}
and
$
\zeta=\sum_{k=1}^{m}\zeta_k.
$
\end{theorem}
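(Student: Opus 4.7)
The plan is to parallel the proofs of the previous two continuous-dependence theorems: invoke Lemma \ref{fie} to write $u$ and $v$ as integral equations, subtract, isolate all $\delta$-driven contributions into $\mathfrak{B}(t)$, bound the remaining Lipschitz contributions via $(H_1)(ii)$ and $(H_2)(i)$, and close with the generalized Gronwall inequality of Lemma \ref{JI}. Concretely, the two integral representations are
\begin{align*}
u(t)&=\frac{(\varphi(t)-\varphi(a))^{\sigma-1}}{\Gamma(\sigma)}\Bigl(u_a+\sum_{a<t_k<t}\mathcal{J}_k(u(t_k^-))\Bigr)+\mathbf{I}_{a^+}^{\varrho;\,\varphi}f(t,u(t)),\\
v(t)&=\frac{(\varphi(t)-\varphi(a))^{\sigma^{*}-1}}{\Gamma(\sigma^{*})}\Bigl(v_a+\sum_{a<t_k<t}\mathcal{J}_k(v(t_k^-))\Bigr)+\mathbf{I}_{a^+}^{\varrho-\delta;\,\varphi}f(t,v(t)),
\end{align*}
with $\sigma^{*}-\sigma=\delta(\nu-1)$.

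I would subtract, then insert $\pm\,\tfrac{(\varphi(t)-\varphi(a))^{\sigma-1}}{\Gamma(\sigma)}\sum_{a<t_k<t}\mathcal{J}_k(v(t_k^-))$ and $\pm\,\mathbf{I}_{a^+}^{\varrho;\,\varphi}f(t,v(t))$ to split $u(t)-v(t)$ into five pieces: (i) the initial-data prefactor mismatch on $u_a$ and $v_a$; (ii) the analogous pure-order jump prefactor mismatch acting on $\sum\mathcal{J}_k(v(t_k^-))$, whose total is bounded by $\zeta=\sum\zeta_k$ via $(H_2)(ii)$; (iii) a Lipschitz-jump part controlled by $(H_2)(i)$; (iv) a Lipschitz-$f$ piece $\mathbf{I}_{a^+}^{\varrho;\,\varphi}[f(\cdot,u)-f(\cdot,v)]$ controlled by $(H_1)(ii)$; and (v) the pure-order integral mismatch $\mathbf{I}_{a^+}^{\varrho;\,\varphi}f(t,v(t))-\mathbf{I}_{a^+}^{\varrho-\delta;\,\varphi}f(t,v(t))$. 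Multiplying through by $(\varphi(t)-\varphi(a))^{1-\sigma}$ collapses (i) into the first summand of $\mathfrak{B}(t)$ via $(\varphi(t)-\varphi(a))^{\sigma^{*}-\sigma}=(\varphi(t)-\varphi(a))^{\delta(\nu-1)}$, collapses (ii) into the $\zeta$-summand, and --- after bounding $|f(s,v(s))|\leq(\varphi(s)-\varphi(a))^{\sigma-1}\|f\|_{\mathcal{PC}_{1-\sigma;\,\varphi}}$ and using the identity $\mathbf{I}_{a^+}^{\alpha;\,\varphi}(\varphi(t)-\varphi(a))^{\sigma-1}=\Gamma(\sigma)(\varphi(t)-\varphi(a))^{\alpha+\sigma-1}/\Gamma(\alpha+\sigma)$ with $\alpha=\varrho$ and $\alpha=\varrho-\delta$ --- collapses (v) into the third summand of $\mathfrak{B}(t)$.

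The remaining Lipschitz contributions (iii) and (iv) assemble into exactly the hypothesis of Lemma \ref{JI}: with $\mathcal{U}(t)=(\varphi(t)-\varphi(a))^{1-\sigma}|u(t)-v(t)|$, $\mathcal{V}(t)=\mathfrak{B}(t)$, $\mathbf{g}(t)=(\varphi(t)-\varphi(a))^{1-\sigma}/\Gamma(\varrho)$, and $\beta_k=1/\Gamma(\sigma)$, the resulting inequality takes the form
$$\mathcal{U}(t)\leq \mathcal{V}(t)+\mathbf{g}(t)\int_a^t\varphi'(s)(\varphi(t)-\varphi(s))^{\varrho-1}\mathcal{U}(s)\,ds+\sum_{a<t_k<t}\beta_k\,\mathcal{U}(t_k^-).$$
An application of Lemma \ref{JI}, followed by the monotone enlargements $(\varphi(t_i)-\varphi(a))^{\varrho}\leq(\varphi(T)-\varphi(a))^{\varrho}$ (and likewise with exponent $1-\sigma+\varrho$) inside the product and the Mittag-Leffler factor, then delivers \eqref{64}.

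The most delicate step is (v). The target expression in $\mathfrak{B}(t)$ is a bracketed \emph{difference} of two positive quantities rather than a sum of moduli, so one cannot simply invoke $|a-b|\leq|a|+|b|$. I would either argue that on the ambient interval the quantity $\Gamma(\sigma)(\varphi(t)-\varphi(a))^{\varrho}/\Gamma(\sigma+\varrho)-\Gamma(\sigma)(\varphi(t)-\varphi(a))^{\varrho-\delta}/\Gamma(\sigma+\varrho-\delta)$ has definite sign (so the absolute value may be dropped), or flag this as an implicit standing assumption inherited from the statement; once the sign issue is fixed, the rest of the bookkeeping is routine.
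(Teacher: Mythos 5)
Your proposal is correct and takes essentially the same route as the paper's own proof: the same two integral representations, the same splitting via inserted cross terms into initial-data, jump-prefactor, Lipschitz-jump, Lipschitz-$f$, and order-mismatch pieces, the same application of Lemma \ref{JI} with $\mathcal{U}(t)=(\varphi(t)-\varphi(a))^{1-\sigma}|u(t)-v(t)|$, $\mathcal{V}(t)=\mathfrak{B}(t)$, $\mathbf{g}(t)=(\varphi(t)-\varphi(a))^{1-\sigma}/\Gamma(\varrho)$, $\beta_k=1/\Gamma(\sigma)$, and the same monotonicity enlargement to $T$. The sign issue you flag in step (v) is in fact present, unaddressed, in the paper's proof as well (the kernel difference is bounded there without absolute values), so your explicit caution is, if anything, more careful than the original.
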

\begin{proof}
Let $u\in {\mathcal{PC}}_{{1-\sigma};\,\varphi}\left( J,\,\mathbb{R}\right) $ be the  solution of  \eqref{p1} and $v\in {\mathcal{PC}}_{{1-\sigma^{*}};\,\varphi}\left( J,\,\mathbb{R}\right) $ be the  solution of \eqref{p4}. Then, their corresponding equivalent integral equations are given by respectively
$$
u(t)= \frac{(\varphi(t)-\varphi(a))^{\sigma-1}}{\Gamma(\sigma)}\, \left(u_a +\sum_{a<t_k<t}\mathcal{J}_k(u(t_k^-))\right)+\mathbf{I}_{a^+}^{\varrho; \, \varphi}f(t,u(t))
$$ 
and
$$
v(t)=  \frac{(\varphi(t)-\varphi(a))^{\sigma^{'}-1}}{\Gamma(\sigma^{'})}\, \left(v_a +\sum_{a<t_k<t}{\mathcal{J}_k}(v(t_k^-))\right)+\mathbf{I}_{a^+}^{\varrho-\delta ; \, \varphi}f(t,v(t)).
$$
Using the hypotheses $(H_1)$ and $(H_2)$, for any $t\in J$,  we have
\begin{align*}
|u(t)-v(t)|
& \leq \left| \frac{u_a (\varphi(t)-\varphi(a))^{\sigma-1}}{\Gamma(\sigma)}-\frac{v_a (\varphi(t)-\varphi(a))^{(\sigma^{'}-1)}}{\Gamma(\sigma^{'})}\right| \\
&\qquad + \left|\frac{(\varphi(t)-\varphi(a))^{\sigma-1}}{\Gamma(\sigma)}\sum_{a<t_k<t}\mathcal{J}_k(u(t_k^-))- \frac{(\varphi(t)-\varphi(a))^{(\sigma^{'}-1)}}{\Gamma(\sigma^{'})}\sum_{a<t_k<t}\mathcal{J}_k(v(t_k^-))\right| \\
&\qquad \qquad+ \left|\mathbf{I}_{a^+}^{\varrho; \, \varphi}f(t,u(t))- \mathbf{I}_{a^+}^{\varrho-\delta ; \, \varphi}f(t,v(t)) \right|\\
&\leq  \left| \frac{u_a (\varphi(t)-\varphi(a))^{\sigma-1}}{\Gamma(\sigma)}-\frac{v_a (\varphi(t)-\varphi(a))^{(\sigma-1+\delta(\beta-1))}}{\Gamma(\sigma+\delta(\beta-1))}\right|\\
&\qquad+ \frac{ (\varphi(t)-\varphi(a))^{\sigma-1}}{\Gamma(\sigma)} \sum_{a<t_k<t}\left|\mathcal{J}_k(u(t_k^-))-\mathcal{J}_k(v(t_k^-))\right|\\
&\qquad+ \left| \frac{ (\varphi(t)-\varphi(a))^{\sigma-1}}{\Gamma(\sigma)}- \frac{ (\varphi(t)-\varphi(a))^{(\sigma-1+\delta(\beta-1))}}{\Gamma(\sigma-1+\delta(\beta-1))}\right|  \sum_{a<t_k<t}\left| \mathcal{J}_k(v(t_k^-)) \right|\\
&\qquad +\frac{1}{\Gamma(\varrho)} \int_{a}^{t}\varphi^{'}(s)(\varphi(t)-\varphi(s))^{\varrho-1}\,\left| f(s, u(s))-f(s, v(s))\right| \, ds\\
&\qquad + \int_{a}^{t}\varphi^{'}(s)\left(\frac{(\varphi(t)-\varphi(s))^{\varrho-1}}{\Gamma(\varrho)}-\frac{(\varphi(t)-\varphi(s))^{\varrho-\delta-1}}{\Gamma(\varrho-\delta)} \right) \,\left| f(s, v(s))\right| \, ds\\
&\leq \left| \frac{u_a (\varphi(t)-\varphi(a))^{\sigma-1}}{\Gamma(\sigma)}-\frac{v_a (\varphi(t)-\varphi(a))^{(\sigma-1+\delta(\beta-1))}}{\Gamma(\sigma+\delta(\beta-1))}\right|\\
&\qquad + \frac{ (\varphi(t)-\varphi(a))^{\sigma-1}}{\Gamma(\sigma)} \sum_{a<t_k<t}(\varphi(t_k)-\varphi(a))^{\sigma-1}\left|u(t_k^-)-v(t_k^-)\right|\\
&\qquad + \zeta\,\left| \frac{ (\varphi(t)-\varphi(a))^{\sigma-1}}{\Gamma(\sigma)}- \frac{ (\varphi(t)-\varphi(a))^{(\sigma-1+\delta(\beta-1))}}{\Gamma(\sigma-1+\delta(\beta-1))}\right|\\
&\qquad + \frac{1}{\Gamma(\varrho)} \int_{a}^{t}\varphi^{'}(s)(\varphi(t)-\varphi(s))^{\varrho-1}\,(\varphi(s)-\varphi(a))^{1-\sigma}\,\left|u(s)- v(s)\right| \, ds\\
&\qquad + \int_{a}^{t}\varphi^{'}(s)\left(\frac{(\varphi(t)-\varphi(s))^{\varrho-1}}{\Gamma(\varrho)}-\frac{(\varphi(t)-\varphi(s))^{\varrho-\delta-1}}{\Gamma(\varrho-\delta)} \right)\\
&\qquad\qquad\qquad \,(\varphi(s)-\varphi(a))^{\sigma-1} (\varphi(s)-\varphi(a))^{1-\sigma} \left| f(s, v(s))\right| \, ds\\
&\leq \left| \frac{u_a (\varphi(t)-\varphi(a))^{\sigma-1}}{\Gamma(\sigma)}-\frac{v_a (\varphi(t)-\varphi(a))^{(\sigma-1+\delta(\beta-1))}}{\Gamma(\sigma+\delta(\beta-1))}\right|\\
&\qquad+  \frac{ (\varphi(t)-\varphi(a))^{\sigma-1}}{\Gamma(\sigma)} \sum_{a<t_k<t}(\varphi(t_k)-\varphi(a))^{\sigma-1}\left|u(t_k^-)-v(t_k^-)\right|\\
&\qquad + \zeta\,\left| \frac{ (\varphi(t)-\varphi(a))^{\sigma-1}}{\Gamma(\sigma)}- \frac{ (\varphi(t)-\varphi(a))^{(\sigma-1+\delta(\beta-1))}}{\Gamma(\sigma-1+\delta(\beta-1))}\right|\\
&\qquad + \frac{1}{\Gamma(\varrho)} \int_{a}^{t}\varphi^{'}(s)(\varphi(t)-\varphi(s))^{\varrho-1}\,(\varphi(s)-\varphi(a))^{1-\sigma}\,\left|u(s)- v(s)\right| \, ds\\
&\qquad + \|f\|_{{\mathcal{PC}}_{{1-\sigma};\,\varphi}} \left\lbrace \frac{\Gamma(\sigma)(\varphi(t)-\varphi(a))^{\varrho+\sigma-1}}{\Gamma(\sigma+\varrho)}- \frac{\Gamma(\sigma)(\varphi(t)-\varphi(a))^{\varrho-\delta+\sigma-1}}{\Gamma(\sigma+\varrho-\delta)} \right\rbrace.
\end{align*}
Therefore, for each $t\in J$ we have
\begin{align}\label{e1111}
&(\varphi(t)-\varphi(a))^{1-\sigma}|u(t)-v(t)| \nonumber\\
&\leq \mathfrak{B}(t)+ \frac{ 1}{\Gamma(\sigma)} \sum_{a<t_k<t}(\varphi(t_k)-\varphi(a))^{1-\sigma}\left|u(t_k^-)-v(t_k^-)\right|\nonumber\\
&\qquad +  \frac{(\varphi(t)-\varphi(a))^{1-\varrho}}{\Gamma(\varrho)} \int_{a}^{t}\varphi^{'}(s)(\varphi(t)-\varphi(s))^{\varrho-1}\,(\varphi(s)-\varphi(a))^{1-\sigma}\,\left|u(s)- v(s)\right| \, ds,
\end{align}
where $\mathfrak{B}(t)$ is as defined in \eqref{b}.
Now applying the Lemma \ref{JI} to \eqref{e1111} with
\begin{align*}
\mathcal{U}(t)=(\varphi(t)-\varphi(a))^{1-\sigma}|u(t)-v(t)|,\, \mathcal{V}(t)= \mathfrak{B}(t),\,
\mathbf{g}(t)=\frac{(\varphi(t)-\varphi(a))^{1-\sigma}}{\Gamma(\varrho)},\,\text{and} \,  
\beta_k =\frac{1}{\Gamma(\sigma)},
\end{align*}
we obtain
\begin{align*}
&(\varphi(t)-\varphi(a))^{1-\sigma}|u(t)-v(t)| \\
&\leq \mathcal{B}(t)\,\left[\prod_{i=1}^{k} \left\lbrace 1+\frac{1}{\Gamma(\sigma)} E_\varrho\left( \frac{(\varphi(t)-\varphi(a))^{1-\sigma}}{\Gamma(\varrho)}\Gamma(\varrho)(\varphi(t_i)-\varphi(a))^\varrho\right) \right\rbrace  \right] \\
& \qquad \times E_\varrho\left( \frac{(\varphi(t)-\varphi(a))^{1-\sigma}}{\Gamma(\varrho)}\Gamma(\varrho)(\varphi(t)-\varphi(a))^\varrho\right)\\
&=\mathcal{B}(t)\,\left[\prod_{i=1}^{k} \left\lbrace 1+\frac{1}{\Gamma(\sigma)} E_\varrho\left( (\varphi(t_i)-\varphi(a))^{1-\sigma+\varrho} \right) \right\rbrace  \right] \, E_\varrho\left( (\varphi(t)-\varphi(a))^{1-\sigma+\varrho} \right)\\
& \leq \mathcal{B}(t)\,\left(1+\frac{1}{\Gamma(\sigma)} E_\varrho (\varphi(T)-\varphi(a))^{1-\sigma+\varrho} \right)^m
    E_\varrho\left((\varphi(T)-\varphi(a))^{1-\sigma+\varrho} \right).
\end{align*}
\end{proof}
\begin{rem} The inequality \eqref{64} gives not only the dependence of solution on the order of $\varphi$-Hilfer derivative but also gives the dependency of solution on the initial condition. Indeed, 
\begin{itemize}

\item [(1)] if $u_a \neq v_a $ and taking $\delta \to 0$ then the inequality \eqref{64} gives dependency of solution on initial condition. 
\item [(2)] if $u_a = v_a $  then the inequality \eqref{64} gives dependency of solution on the order of $\varphi$-Hilfer derivative.
\end{itemize}
\end{rem}
\section{Ulam Stabilities of $\varphi$--HFDE}
In this section, we investigate the  Ulam--Hyers stabilities of the impulsive $\varphi$-HFDE \eqref{p1}.

Let $\tilde{u}\in {\mathcal{PC}}_{{1-\sigma};\,\varphi}\left( J,\,\mathbb{R}\right)$, ~$\chi >0, ~\epsilon>0$ and $\theta : J\to \mathbb{R}$ be a nondecreasing function. We consider the following inequalities:
\begin{align}
\begin{cases}
|^H \mathbf{D}^{\varrho,\nu;\,\varphi}_{a^+}  \tilde{u}(t)-f(t,\tilde{u}(t))|\leq \epsilon,~ t \in J \\
 |\Delta \mathbf{I}_{a^+}^{1-\sigma; \, \varphi}\tilde{u}(t_k)- \mathcal{J}_k(\tilde{u}(t_k^-))| \leq \epsilon, ~k=1,2,\cdots,m, \label{HU1}
\end{cases}
\end{align}
\begin{align}
\begin{cases}
|^H\mathbf{D}^{\varrho,\nu;\,\varphi}_{a^+}\tilde{u}(t)-f(t,\tilde{u}(t))|\leq \theta(t),~t \in J \\
 |\Delta \mathbf{I}_{a^+}^{1-\sigma; \, \varphi}\tilde{u}(t_k)- \mathcal{J}_k(\tilde{u}(t_k^-))| \leq \chi, ~k=1,2,\cdots,m \label{HU2}
\end{cases}
\end{align}
and
\begin{align}
\begin{cases}
|^H\mathbf{D}^{\varrho,\nu;\,\varphi}_{a^+}\tilde{u}(t)-f(t,\tilde{u}(t))|\leq \epsilon \, \theta(t),~ t \in J\\
|\Delta \mathbf{I}_{a^+}^{1-\sigma; \, \varphi}\tilde{u}(t_k)- \mathcal{J}_k(\tilde{u}(t_k^-))| \leq \epsilon\, \chi, ~k=1,2,\cdots,m. \label{HU3}
\end{cases}
\end{align}
 \begin{definition}
The problem \eqref{p1} is said to be Ulam--Hyers (UH) stable if for $\epsilon>0$ there exists a constant $C_{m,\varrho}>0$ such that, for every solution $\tilde{u}\in {\mathcal{PC}}_{{1-\sigma};\,\varphi}\left( J,\,\mathbb{R}\right) $ of the inequality \eqref{HU1}, there is a unique solution $u\in {\mathcal{PC}}_{{1-\sigma};\,\varphi}\left( J,\,\mathbb{R}\right)$ to the problem \eqref{p1} satisfying 
$$
\left\|\tilde{u}-u \right\|_{{\mathcal{PC}}_{{1-\sigma};\,\varphi}\left( J,\,\mathbb{R}\right)}\leq C_{m,\varrho}\,\epsilon, ~t\in J. 
$$
 \end{definition}
 \begin{definition}
The problem \eqref{p1} is said to be generalized Ulam--Hyers (GUH) stable if 
there exists  ~$\phi_{m,\varrho}\in C(\mathbb{R^+},\mathbb{R^+})$ with $\phi_{m,\varrho}(0)=0$ such that for each solution $\tilde{u}\in {\mathcal{PC}}_{{1-\sigma};\,\varphi}\left( J,\,\mathbb{R}\right) $ of the inequality \eqref{HU2} there is a unique solution $u\in {\mathcal{PC}}_{{1-\sigma};\,\varphi}\left( J,\,\mathbb{R}\right)$ to the problem \eqref{p1} satisfying 
$$
\left\|\tilde{u}-u \right\|_{{\mathcal{PC}}_{{1-\sigma};\,\varphi}\left( J,\,\mathbb{R}\right)}\leq \,\phi_{m,\varrho}(\epsilon), ~t\in J. 
$$  
 \end{definition}
 \begin{definition}
The problem \eqref{p1} is said to be Ulam--Hyers--Rassias (UHR) stable corresponding to $(\theta,\chi)$ if for every $\epsilon>0$ there exists a real number $C_{m,\varrho,\theta}>0$ such that, for every solution $\tilde{u}\in {\mathcal{PC}}_{{1-\sigma};\,\varphi}\left( J,\,\mathbb{R}\right) $ of the inequality \eqref{HU3}, there is a unique solution $u\in {\mathcal{PC}}_{{1-\sigma};\,\varphi}\left( J,\,\mathbb{R}\right)$ to the problem \eqref{p1} satisfying 
$$
(\varphi(t)-\varphi(a))^{1-\sigma}\,\left|\tilde{u}(t)-u(t) \right|\leq C_{m,\varrho,\theta}\,\epsilon\,(\theta(t)+\chi), ~t\in J. 
$$
 \end{definition}
 \begin{definition}
The problem \eqref{p1} is said to be generalized Ulam--Hyers--Rassias (GUHR)stable corresponding to $(\theta,\chi)$ if there exists a constant $C_{m,\varrho,\theta}>0$ such that, for every solution $\tilde{u}\in {\mathcal{PC}}_{{1-\sigma};\,\varphi}\left( J,\,\mathbb{R}\right) $ of the inequality \eqref{HU2},there is a unique solution $u\in {\mathcal{PC}}_{{1-\sigma};\,\varphi}\left( J,\,\mathbb{R}\right)$ to the problem \eqref{p1} satisfying 
$$
(\varphi(t)-\varphi(a))^{1-\sigma}\,\left|\tilde{u}(t)-u(t) \right|\leq C_{m,\varrho,\theta}\,(\theta(t)+\chi), ~~~t\in J. 
$$
 \end{definition}
 \begin{rem}\label{rm}
 The function $\tilde{u}\in {\mathcal{PC}}_{{1-\sigma};\,\varphi}\left( J,\,\mathbb{R}\right) $ is called a solution of the inequality \eqref{HU1} if there is a function $\mathcal{E} \in {\mathcal{PC}}_{{1-\sigma};\,\varphi}\left( J,\,\mathbb{R}\right)$ together with a sequence $\{\mathcal{E}_k\},\,k=1,2,\cdots,m$ depending on $\tilde{u}$ with
 \begin{itemize}
\item[\rm (1)]$|\mathcal{E}(t)|\leq \epsilon,\,|\mathcal{E}_k|\leq \epsilon,\, t\in J,\, k=1,2,\cdots,m,$
\item[\rm (2)] $^H \mathbf{D}^{\varrho,\, \nu; \, \varphi}_{a^+}\tilde{u}(t)=f(t, \tilde{u}(t))+\mathcal{E}(t),\,t\in J,$
\item[\rm (3)]$\Delta \mathbf{I}_{a^+}^{1-\sigma; \, \varphi}\tilde{u}(t_k)= \mathcal{J}_k(u(t_k^-))+\mathcal{E}_k,  ~k = 1,2,\cdots,m.$
 \end{itemize}
 \end{rem}
 
Looking towards the Remark \ref{rm}, one can state similar types of remark for the inequalities \eqref{HU2} and \eqref{HU3}.
\begin{theorem}\label{TUH}
If the hypotheses $(H_1)(ii)$ and $(H_2)(i)$ are satisfied then the problem \eqref{p1} is UH stable.
\end{theorem}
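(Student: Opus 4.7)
The plan is to convert the Ulam--Hyers inequality \eqref{HU1} into a perturbed integral equation, compare it to the fixed-point integral equation of an exact solution, and close the argument with the impulsive Gronwall-type bound in Lemma \ref{JI}.

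First I would fix a solution $\tilde u\in{\mathcal{PC}}_{1-\sigma;\varphi}(J,\mathbb R)$ of \eqref{HU1} and invoke Remark \ref{rm} to produce $\mathcal E\in{\mathcal{PC}}_{1-\sigma;\varphi}(J,\mathbb R)$ and numbers $\mathcal E_1,\dots,\mathcal E_m$ satisfying $|\mathcal E(t)|\le\epsilon$, $|\mathcal E_k|\le\epsilon$, such that $\tilde u$ solves the $\varphi$--HFDE with $f$ replaced by $f+\mathcal E$ and $\mathcal J_k$ replaced by $\mathcal J_k+\mathcal E_k$. Applying Lemma \ref{fie} to this perturbed problem,
\begin{align*}
\tilde u(t)&=\tfrac{(\varphi(t)-\varphi(a))^{\sigma-1}}{\Gamma(\sigma)}\Bigl(\mathbf I_{a^+}^{1-\sigma;\varphi}\tilde u(a)+\sum_{a<t_k<t}\bigl[\mathcal J_k(\tilde u(t_k^-))+\mathcal E_k\bigr]\Bigr)+\mathbf I_{a^+}^{\varrho;\varphi}\bigl[f(t,\tilde u(t))+\mathcal E(t)\bigr].
\end{align*}
Let $u$ be the unique solution of \eqref{p1} with $\mathbf I_{a^+}^{1-\sigma;\varphi}u(a)=\mathbf I_{a^+}^{1-\sigma;\varphi}\tilde u(a)$; its existence follows from Theorem \ref{exi}, and its uniqueness from the Remark following the continuous-dependence theorem (which rests on the same hypotheses $(H_1)(ii)$ and $(H_2)(i)$).

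Next I would subtract the two integral representations term-by-term, bound the perturbation contributions by
$$\tfrac{(\varphi(t)-\varphi(a))^{\sigma-1}}{\Gamma(\sigma)}\,m\,\epsilon\quad\text{and}\quad\mathbf I_{a^+}^{\varrho;\varphi}\epsilon\le\tfrac{(\varphi(T)-\varphi(a))^{\varrho}}{\Gamma(\varrho+1)}\epsilon,$$
and apply the Lipschitz estimates $(H_1)(ii)$ and $(H_2)(i)$ to the remaining terms. Then multiplying through by the weight $(\varphi(t)-\varphi(a))^{1-\sigma}$ produces, for every $t\in J$, an inequality of the exact shape required by Lemma \ref{JI}:
\begin{align*}
(\varphi(t)-\varphi(a))^{1-\sigma}|\tilde u(t)-u(t)|&\le \Bigl(\tfrac{m}{\Gamma(\sigma)}+\tfrac{(\varphi(T)-\varphi(a))^{1-\sigma+\varrho}}{\Gamma(\varrho+1)}\Bigr)\epsilon\\
&\quad+\tfrac{1}{\Gamma(\sigma)}\sum_{a<t_k<t}(\varphi(t_k)-\varphi(a))^{1-\sigma}|\tilde u(t_k^-)-u(t_k^-)|\\
&\quad+\tfrac{(\varphi(t)-\varphi(a))^{1-\sigma}}{\Gamma(\varrho)}\int_{a}^{t}\varphi'(s)(\varphi(t)-\varphi(s))^{\varrho-1}(\varphi(s)-\varphi(a))^{1-\sigma}|\tilde u(s)-u(s)|\,ds.
\end{align*}

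Finally, I would apply Lemma \ref{JI} with $\mathcal U(t)=(\varphi(t)-\varphi(a))^{1-\sigma}|\tilde u(t)-u(t)|$, the constant function $\mathcal V\equiv\bigl(\tfrac{m}{\Gamma(\sigma)}+\tfrac{(\varphi(T)-\varphi(a))^{1-\sigma+\varrho}}{\Gamma(\varrho+1)}\bigr)\epsilon$, $\mathbf g(t)=\tfrac{(\varphi(t)-\varphi(a))^{1-\sigma}}{\Gamma(\varrho)}$ and $\beta_k=\tfrac{1}{\Gamma(\sigma)}$. Using the monotonicity of $\varphi$ to majorise each $(\varphi(t_i)-\varphi(a))^\varrho$ by $(\varphi(T)-\varphi(a))^\varrho$ (exactly as in Step 4 of Theorem \ref{exi}, where the constant $\mathcal A_{m,\varrho}$ in \eqref{32} was isolated), I obtain
$$\|\tilde u-u\|_{{\mathcal{PC}}_{1-\sigma;\varphi}(J,\mathbb R)}\le C_{m,\varrho}\,\epsilon,\qquad C_{m,\varrho}:=\Bigl(\tfrac{m}{\Gamma(\sigma)}+\tfrac{(\varphi(T)-\varphi(a))^{1-\sigma+\varrho}}{\Gamma(\varrho+1)}\Bigr)\mathcal A_{m,\varrho},$$
which is UH stability. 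The only mildly delicate step is the bookkeeping in the second bullet above: one has to keep the impulse perturbations $\mathcal E_k$ separated from the $\mathcal J_k$--differences before invoking $(H_2)(i)$, and to factor the weight $(\varphi(t)-\varphi(a))^{1-\sigma}$ through the Riemann--Liouville integral of the constant $\epsilon$ so that it is absorbed into the forcing term $\mathcal V$ rather than being entangled in the convolution part. Once this reformulation is carried out, Lemma \ref{JI} delivers the stability constant in closed form.
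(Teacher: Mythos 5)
Your proposal is correct and follows essentially the same route as the paper's own proof: Remark \ref{rm} and Lemma \ref{fie} give the perturbed integral representation of $\tilde u$, the comparison is made with the solution $u$ of \eqref{p1} sharing the initial value $\mathbf{I}_{a^+}^{1-\sigma;\varphi}\tilde u(a)$, and Lemma \ref{JI} with the same choices of $\mathcal{U}$, $\mathcal{V}$, $\mathbf{g}$, $\beta_k$ yields exactly the constant $C_{m,\varrho}=\mathcal{A}_{m,\varrho}\bigl(\tfrac{m}{\Gamma(\sigma)}+\tfrac{(\varphi(T)-\varphi(a))^{1-\sigma+\varrho}}{\Gamma(\varrho+1)}\bigr)$. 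Your explicit remark that uniqueness of $u$ comes from the continuous-dependence estimate is a small point the paper leaves implicit, but it does not change the argument.
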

\begin{proof}
Let  $\tilde{u}\in {\mathcal{PC}}_{{1-\sigma};\,\varphi}\left( J,\,\mathbb{R}\right) $ be the  solution of the inequality \eqref{HU1}.  Then in the view of  Remark \ref{rm}, we have
\begin{align}\label{55}
\tilde{u}(t)&=\frac{(\varphi(t)-\varphi(a))^{\sigma-1}}{\Gamma(\sigma)}\, \left(\mathbf{I}_{a^+}^{1-\sigma; \, \varphi}\tilde{u}(a) +\sum_{a<t_k<t}\mathcal{J}_k(\tilde{u}(t_k^-))+\sum_{a<t_k<t} \mathcal{E}_k \right) \nonumber\\
&\qquad+\mathbf{I}_{a^+}^{\varrho; \, \varphi}f(t,\tilde{u}(t))+\mathbf{I}_{a^+}^{\varrho; \, \varphi}\mathcal{E}(t).
\end{align}

Therefore, we have
\begin{align} \label{56}
&\left|\tilde{u}(t)-\frac{(\varphi(t)-\varphi(a))^{\sigma-1}}{\Gamma(\sigma)}\, \left(\mathbf{I}_{a^+}^{1-\sigma; \, \varphi}\tilde{u}(a) +\sum_{a<t_k<t}\mathcal{J}_k(\tilde{u}(t_k^-))\right) - \mathbf{I}_{a^+}^{\varrho; \, \varphi}f(t,\tilde{u}(t))\right| \nonumber \\
&=
\left|\frac{(\varphi(t)-\varphi(a))^{\sigma-1}}{\Gamma(\sigma)}\,  \sum_{a<t_k<t} \mathcal{E}_k  +\mathbf{I}_{a^+}^{\varrho; \, \varphi}\mathcal{E}(t) \right| \nonumber \\
&\leq \frac{(\varphi(t)-\varphi(a))^{\sigma-1}}{\Gamma(\sigma)}\sum_{a<t_k<t} \epsilon\, + \epsilon\,\mathbf{I}_{a^+}^{\varrho; \, \varphi}(\varphi(t)-\varphi(a))^0 \nonumber \\
&\leq \frac{m\,\epsilon\,(\varphi(t)-\varphi(a))^{\sigma-1}}{\Gamma(\sigma)}+\frac{\epsilon\,(\varphi(t)-\varphi(a))^{\varrho}}{\Gamma(\varrho+1)}
\end{align}
Consider the impulsive $\varphi$-HFDE 
\begin{align*}
\begin{cases}
 ^H \mathbf{D}^{\varrho,\, \nu; \, \varphi}_{a^+}u(t)=f(t, u(t)),~t \in J-\{t_1, t_2,\cdots ,t_m\},\\
\Delta \mathbf{I}_{a^+}^{1-\sigma; \, \varphi}u(t_k)= \mathcal{J}_k(u(t_k^-)),  ~k = 1,2,\cdots,m, \\
 \mathbf{I}_{a^+}^{1-\sigma; \, \varphi}u(a)=\mathbf{I}_{a^+}^{1-\sigma; \, \varphi}\tilde{u}(a). 
\end{cases}
\end{align*}
Then by existence Theorem \ref{exi} it has at least one solution and  in the view of lemma \ref{fie}, we have
\begin{equation}  \label{566}
u(t) =
 \frac{(\varphi(t)-\varphi(a))^{\sigma-1}}{\Gamma(\sigma)}\, \left(\mathbf{I}_{a^+}^{1-\sigma; \, \varphi}\tilde{u}(a) +\sum_{a<t_k<t}\mathcal{J}_k(u(t_k^-))\right)+\mathbf{I}_{a^+}^{\varrho; \, \varphi}f(t,u(t)), ~\text{ $t \in J$ }.
\end{equation}
Utilizing  \eqref{56}, \eqref{566} and the hypotheses $(H_1)(ii)$ and $(H_2)(i)$, we get
\begin{align*}
&|\tilde{u}(t)-u(t)|\\
&= \left|\tilde{u}(t)-\frac{(\varphi(t)-\varphi(a))^{\sigma-1}}{\Gamma(\sigma)}\, \left(\mathbf{I}_{a^+}^{1-\sigma; \, \varphi}\tilde{u}(a) +\sum_{a<t_k<t}\mathcal{J}_k(u(t_k^-))\right)-\mathbf{I}_{a^+}^{\varrho; \, \varphi}f(t,u(t)) \right| \\
&\leq \left|\tilde{u}(t)-\frac{(\varphi(t)-\varphi(a))^{\sigma-1}}{\Gamma(\sigma)}\, \left(\mathbf{I}_{a^+}^{1-\sigma; \, \varphi}\tilde{u}(a)+\sum_{a<t_k<t}\mathcal{J}_k(\tilde{u}(t_k^-))\right) -\mathbf{I}_{a^+}^{\varrho; \, \varphi}f(t,\tilde{u}(t))\right|\\
&\qquad + \frac{(\varphi(t)-\varphi(a))^{\sigma-1}}{\Gamma(\sigma)}\sum_{a<t_k<t}\left| \mathcal{J}_k(\tilde{u}(t_k^-))-\mathcal{J}_k(u(t_k^-))\right| \\
&\qquad + \left|\mathbf{I}_{a^+}^{\varrho; \, \varphi}\left(f(t,\tilde{u}(t))-f(t,u(t)) \right)  \right| \\
&\leq \frac{m\,\epsilon\,(\varphi(t)-\varphi(a))^{\sigma-1}}{\Gamma(\sigma)}+\frac{\epsilon\,(\varphi(t)-\varphi(a))^{\varrho}}{\Gamma(\varrho+1)}\\
&\qquad + \frac{(\varphi(t)-\varphi(a))^{\sigma-1}}{\Gamma(\sigma)} \sum_{a<t_k<t}(\varphi(t_k)-\varphi(a))^{1-\sigma}\left|\tilde{u}(t_k^-)-u(t_k^-) \right|\\
&\qquad + \frac{1}{\Gamma(\varrho)} \int_{a}^{t}\varphi^{'}(s)(\varphi(t)-\varphi(s))^{\varrho-1}\,(\varphi(s)-\varphi(a))^{1-\sigma}|\tilde{u}(s)-u(s)|\, ds.
\end{align*}
Thus for any $t\in J$, we have
\begin{align*}
&(\varphi(t)-\varphi(a))^{1-\sigma}\left|\tilde{u}(t)-u(t) \right|\\
&\leq \epsilon \,\left( \frac{m}{\Gamma(\sigma)}+\frac{(\varphi(T)-\varphi(a))^{1-\sigma+\varrho}}{\Gamma(\varrho+1)}\right) +\frac{1}{\Gamma(\sigma)} \sum_{a<t_k<t}(\varphi(t_k^-)-\varphi(a))^{1-\sigma}\left|\tilde{u}(t_k^-)-u(t_k^-) \right|\\
&\qquad +\frac{(\varphi(t)-\varphi(a))^{1-\sigma}}{\Gamma(\varrho)}\int_{a}^{t}\varphi^{'}(s)(\varphi(t)-\varphi(s))^{\varrho-1}\,(\varphi(s)-\varphi(a))^{1-\sigma}|\tilde{u}(s)-u(s)|\, ds.
\end{align*}
By application of Lemma \eqref{JI} to the above inequality with
\begin{align*}
\mathcal{U}(t)&=(\varphi(t)-\varphi(a))^{1-\sigma}\left|\tilde{u}(t)-u(t) \right|,\\
\mathcal{V}(t)&= \epsilon \,\left( \frac{m}{\Gamma(\sigma)}+\frac{(\varphi(T)-\varphi(a))^{1-\sigma+\varrho}}{\Gamma(\varrho+1)}\right),\\
\mathbf{g}(t)&=\frac{(\varphi(t)-\varphi(a))^{1-\sigma}}{\Gamma(\varrho)},\\
\beta_k&=\frac{1}{\Gamma(\sigma)},
\end{align*}
 we obtain
 \begin{align*}
(\varphi(t)-\varphi(a))^{1-\sigma}\left|\tilde{u}(t)-u(t) \right|&
\leq \epsilon \,\left( \frac{m}{\Gamma(\sigma)}+\frac{(\varphi(T)-\varphi(a))^{1-\sigma+\varrho}}{\Gamma(\varrho+1)}\right)\\
&\qquad \times \left[\prod_{i=1}^{k} \left\lbrace 1+\frac{1}{\Gamma(\sigma)} E_\varrho\left( \frac{(\varphi(t)-\varphi(a))^{1-\sigma}}{\Gamma(\varrho)}\Gamma(\varrho)(\varphi(t_i)-\varphi(a))^\varrho\right) \right\rbrace  \right] \\
& \qquad \times E_\varrho\left( \frac{(\varphi(t)-\varphi(a))^{1-\sigma}}{\Gamma(\varrho)}\Gamma(\varrho)(\varphi(t)-\varphi(a))^\varrho\right). 
 \end{align*}
  Since $\varphi$ is increasing,  $(\varphi(t_i)-\varphi(a))^\varrho\leq(\varphi(t)-\varphi(a))^\varrho$, ~ $k=1,2,\cdots,m$, and hence above inequality reduces to 
 \begin{align*}
(\varphi(t)-\varphi(a))^{1-\sigma}\left|\tilde{u}(t)-u(t) \right|
 &\leq \epsilon \, \, \mathcal{A}_{m,\varrho}\,\left( \frac{m}{\Gamma(\sigma)}+\frac{(\varphi(T)-\varphi(a))^{1-\sigma+\varrho}}{\Gamma(\varrho+1)}\right) , ~t \in J,
 \end{align*}
where $\mathcal{A}_{m,\varrho}$ is defined in \eqref{32}.
Therefore, 
$$
\left\|\tilde{u}-u \right\|_{{\mathcal{PC}}_{{1-\sigma};\,\varphi}\left( J,\,\mathbb{R}\right)} \leq  C_{m,\varrho}\,\epsilon,
$$
where
$$
 C_{m,\varrho}=\mathcal{A}_{m,\varrho}\,\left( \frac{m}{\Gamma(\sigma)}+\frac{(\varphi(T)-\varphi(a))^{1-\sigma+\varrho}}{\Gamma(\varrho+1)}\right) 
 $$
This proves  the problem \eqref{p1} is UH stable.
\end{proof}
\begin{cor}
If the hypotheses $(H_1)(ii)$ and $(H_2)(i)$ are satisfied then the problem \eqref{p1} is GUH stable.
\end{cor}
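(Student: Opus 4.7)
The plan is to derive the GUH stability immediately from Theorem \ref{TUH}: the GUH notion only demands a continuous function $\phi_{m,\varrho}\in C(\mathbb{R}^+,\mathbb{R}^+)$ with $\phi_{m,\varrho}(0)=0$ bounding $\|\tilde u - u\|$, while Theorem \ref{TUH} already delivers a bound that is \emph{linear} in $\epsilon$, which is a special case of such a $\phi$.

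Concretely, I would take
$$
\phi_{m,\varrho}(\epsilon) := C_{m,\varrho}\,\epsilon,
$$
where
$$
C_{m,\varrho}=\mathcal{A}_{m,\varrho}\left(\frac{m}{\Gamma(\sigma)}+\frac{(\varphi(T)-\varphi(a))^{1-\sigma+\varrho}}{\Gamma(\varrho+1)}\right)
$$
is exactly the constant produced at the end of the proof of Theorem \ref{TUH}. The map $\epsilon\mapsto C_{m,\varrho}\,\epsilon$ is continuous on $\mathbb{R}^+$ and vanishes at $0$, so it satisfies the structural requirements of the GUH definition.

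Then, given any $\tilde u\in {\mathcal{PC}}_{{1-\sigma};\,\varphi}(J,\mathbb{R})$ solving the relevant $\epsilon$-inequality, Theorem \ref{TUH} supplies a unique solution $u\in {\mathcal{PC}}_{{1-\sigma};\,\varphi}(J,\mathbb{R})$ of \eqref{p1} (with matching weighted initial value) for which
$$
\|\tilde u - u\|_{{\mathcal{PC}}_{{1-\sigma};\,\varphi}(J,\mathbb{R})}\leq C_{m,\varrho}\,\epsilon = \phi_{m,\varrho}(\epsilon),
$$
which is precisely the GUH estimate.

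No genuine obstacle arises; the corollary is a purely linguistic reformulation of Theorem \ref{TUH}, and the same pair of hypotheses $(H_1)(ii)$, $(H_2)(i)$ suffices, so no additional conditions on $f$ or $\mathcal{J}_k$ need be invoked. The only point worth a sentence is the verification that $\phi_{m,\varrho}$ lies in the declared function class and is normalised at the origin, both of which are immediate for the linear choice above.
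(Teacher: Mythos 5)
Your proposal is correct and coincides with the paper's own argument, which likewise proves the corollary by setting $\phi_{m,\varrho}(\epsilon)=C_{m,\varrho}\,\epsilon$ with the constant from Theorem \ref{TUH}. Your additional remark that this linear map is continuous on $\mathbb{R}^+$ and vanishes at the origin is a harmless elaboration of the same one-line reduction.
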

\begin{proof}
Proof follows by setting $\phi_{m,\varrho}(\epsilon)=C_{m,\varrho}\,\epsilon$. 
\end{proof}

\begin{theorem} \label{ThGHU}
Suppose that $(H_1)(ii)$ and $(H_2)(i)$ hold. Moreover, assume that for a nondecreasing function $\theta \in C(J,\mathbb{R})$ there exists $\lambda_\theta > 0$ such that
$\mathbf{I}_{a^+}^{\varrho; \, \varphi} \theta(t)\leq \lambda_\theta \, \theta(t),~ t\in J$. Then, the problem \eqref{p1} is UHR stable with respect to $(\theta,\chi)$.
\end{theorem}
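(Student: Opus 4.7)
The proof will parallel closely the argument for Theorem \ref{TUH}, replacing the constant bound $\epsilon$ by the pair $(\epsilon\theta(t),\epsilon\chi)$ and exploiting the hypothesis $\mathbf{I}_{a^+}^{\varrho;\varphi}\theta(t)\leq\lambda_\theta\theta(t)$ at the critical step. First, let $\tilde{u}\in\mathcal{PC}_{1-\sigma;\varphi}(J,\mathbb{R})$ be a solution of the inequality \eqref{HU3}. By the natural analogue of Remark \ref{rm} adapted to \eqref{HU3}, there exist a function $\mathcal{E}\in\mathcal{PC}_{1-\sigma;\varphi}(J,\mathbb{R})$ and constants $\mathcal{E}_k$ ($k=1,\ldots,m$) with $|\mathcal{E}(t)|\leq\epsilon\,\theta(t)$ for $t\in J$ and $|\mathcal{E}_k|\leq\epsilon\,\chi$, such that $\tilde{u}$ admits the integral representation
$$
\tilde{u}(t)=\frac{(\varphi(t)-\varphi(a))^{\sigma-1}}{\Gamma(\sigma)}\left(\mathbf{I}_{a^+}^{1-\sigma;\varphi}\tilde{u}(a)+\sum_{a<t_k<t}\mathcal{J}_k(\tilde{u}(t_k^-))+\sum_{a<t_k<t}\mathcal{E}_k\right)+\mathbf{I}_{a^+}^{\varrho;\varphi}f(t,\tilde{u}(t))+\mathbf{I}_{a^+}^{\varrho;\varphi}\mathcal{E}(t),
$$
mirroring \eqref{55}. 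Let $u\in\mathcal{PC}_{1-\sigma;\varphi}(J,\mathbb{R})$ denote the solution of \eqref{p1} with initial data $\mathbf{I}_{a^+}^{1-\sigma;\varphi}u(a)=\mathbf{I}_{a^+}^{1-\sigma;\varphi}\tilde{u}(a)$, whose existence is granted by Theorem \ref{exi} (uniqueness follows a posteriori from \eqref{53}).

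Next, subtract the integral representation \eqref{566} of $u$ from that of $\tilde{u}$, and estimate using $(H_1)(ii)$, $(H_2)(i)$. The perturbation terms are controlled by $\sum_{a<t_k<t}|\mathcal{E}_k|\leq m\epsilon\chi$ and by
$$
\mathbf{I}_{a^+}^{\varrho;\varphi}|\mathcal{E}(t)|\leq\epsilon\,\mathbf{I}_{a^+}^{\varrho;\varphi}\theta(t)\leq\epsilon\,\lambda_\theta\,\theta(t),
$$
where the second inequality is precisely the standing hypothesis. After multiplying by $(\varphi(t)-\varphi(a))^{1-\sigma}$ and grouping terms, one arrives at an integral inequality of the form
\begin{align*}
(\varphi(t)-\varphi(a))^{1-\sigma}|\tilde{u}(t)-u(t)|&\leq \mathcal{V}_\theta(t)+\frac{1}{\Gamma(\sigma)}\sum_{a<t_k<t}(\varphi(t_k)-\varphi(a))^{1-\sigma}|\tilde{u}(t_k^-)-u(t_k^-)|\\
&\qquad+\frac{(\varphi(t)-\varphi(a))^{1-\sigma}}{\Gamma(\varrho)}\int_a^t\varphi'(s)(\varphi(t)-\varphi(s))^{\varrho-1}\\
&\qquad\qquad\times(\varphi(s)-\varphi(a))^{1-\sigma}|\tilde{u}(s)-u(s)|\,ds,
\end{align*}
with forcing $\mathcal{V}_\theta(t)=\epsilon\bigl[\frac{m\chi}{\Gamma(\sigma)}+\lambda_\theta(\varphi(T)-\varphi(a))^{1-\sigma}\theta(t)\bigr]$.

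Then apply Lemma \ref{JI} with $\mathcal{U}(t)=(\varphi(t)-\varphi(a))^{1-\sigma}|\tilde{u}(t)-u(t)|$, $\mathbf{g}(t)=(\varphi(t)-\varphi(a))^{1-\sigma}/\Gamma(\varrho)$, $\beta_k=1/\Gamma(\sigma)$ (exactly as in the proof of Theorem \ref{TUH}) and $\mathcal{V}=\mathcal{V}_\theta$. Monotonicity of $\varphi$ lets us bound the interior Mittag--Leffler arguments by $(\varphi(T)-\varphi(a))^{1-\sigma+\varrho}$, so that in the notation of \eqref{32},
$$
(\varphi(t)-\varphi(a))^{1-\sigma}|\tilde{u}(t)-u(t)|\leq \mathcal{A}_{m,\varrho}\,\mathcal{V}_\theta(t)\leq C_{m,\varrho,\theta}\,\epsilon\,(\theta(t)+\chi),
$$
with $C_{m,\varrho,\theta}=\mathcal{A}_{m,\varrho}\,\max\!\left\{\frac{m}{\Gamma(\sigma)},\,\lambda_\theta(\varphi(T)-\varphi(a))^{1-\sigma}\right\}$, which is the required UHR estimate. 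The one genuinely delicate point is the shaping of $\mathcal{V}_\theta$ into a constant multiple of $\theta(t)+\chi$: without the hypothesis $\mathbf{I}_{a^+}^{\varrho;\varphi}\theta(t)\leq\lambda_\theta\theta(t)$, the fractional integral of the perturbation would produce an auxiliary time-dependent factor (a convolution of $\theta$ against a power kernel) that could not be absorbed back into a single $\theta(t)$; this absorption is what makes the UHR form admissible.
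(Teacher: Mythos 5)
Your proposal is correct and follows essentially the same route as the paper: derive the perturbed integral representation from the analogue of Remark \ref{rm} for \eqref{HU1}--\eqref{HU3}, bound the perturbation terms by $m\epsilon\chi/\Gamma(\sigma)$ and $\epsilon\lambda_\theta\theta(t)$ using the standing hypothesis, and then apply Lemma \ref{JI} with the same $\mathcal{U}$, $\mathbf{g}$, $\beta_k$ as in Theorem \ref{TUH} to reach the bound $C_{m,\varrho,\theta}\,\epsilon\,(\theta(t)+\chi)$. The only differences are cosmetic (you bound $(\varphi(t)-\varphi(a))^{1-\sigma}$ by $(\varphi(T)-\varphi(a))^{1-\sigma}$ before invoking the Gronwall lemma and take a maximum rather than a sum in the constant), and you correctly work with inequality \eqref{HU3}, as the UHR definition requires.
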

\begin{proof}
Let $\tilde{u}\in {\mathcal{PC}}_{{1-\sigma};\,\varphi}\left( J,\,\mathbb{R}\right) $ is a solution of the inequality \eqref{HU2}. Then proceeding as in the proof of Theorem \ref{TUH}, we obtain 
\begin{align} \label{57}
&\left|\tilde{u}(t)-\frac{(\varphi(t)-\varphi(a))^{\sigma-1}}{\Gamma(\sigma)}\, \left(u_a +\sum_{a<t_k<t}\mathcal{J}_k(\tilde{u}(t_k^-))\right) -\mathbf{I}_{a^+}^{\varrho; \, \varphi}f(t,\tilde{u}(t))\right| \nonumber \\
&\leq \frac{m\,\,\epsilon\,\chi \,(\varphi(t)-\varphi(a))^{\sigma-1}}{\Gamma(\sigma)}+ \epsilon\, \lambda_\theta \, \theta(t).
\end{align}
Utilizing\eqref{566}, \eqref{57} and the hypotheses $(H_1)(ii)$ and $(H_2)(i)$, we get 
\begin{align*}
&|\tilde{u}(t)-u(t)|\\
&= \left|\tilde{u}(t)-\frac{(\varphi(t)-\varphi(a))^{\sigma-1}}{\Gamma(\sigma)}\, \left(u_a +\sum_{a<t_k<t}\mathcal{J}_k(\tilde{u}(t_k^-))\right)-\mathbf{I}_{a^+}^{\varrho; \, \varphi}f(t,\tilde{u}(t)) \right| \\
&\qquad + \frac{(\varphi(t)-\varphi(a))^{\sigma-1}}{\Gamma(\sigma)}\sum_{a<t_k<t}\left| \mathcal{J}_k(\tilde{u}(t_k^-))-\mathcal{J}_k(u(t_k^-))\right| \\
&\qquad + \left|\mathbf{I}_{a^+}^{\varrho; \, \varphi}\left(f(t,\tilde{u}(t))-f(t,u(t)) \right)  \right|\\
&\leq \left(  \frac{m\,\,\epsilon\,\chi \,(\varphi(t)-\varphi(a))^{\sigma-1}}{\Gamma(\sigma)}+ \epsilon\, \lambda_\theta \, \theta(t)\right) \\
&\qquad + \frac{(\varphi(t)-\varphi(a))^{\sigma-1}}{\Gamma(\sigma)} \sum_{a<t_k<t}(\varphi(t_k^-)-\varphi(a))^{1-\sigma}\left|\tilde{u}(t_k^-)-u(t_k^-) \right|\\
&\qquad + \frac{1}{\Gamma(\varrho)} \int_{a}^{t}\varphi^{'}(s)(\varphi(t)-\varphi(s))^{\varrho-1}\,(\varphi(s)-\varphi(a))^{1-\sigma}|\tilde{u}(s)-u(s)|\, ds.
\end{align*}
Thus for each $t\in J,$ we obtain  
\begin{align*}
&(\varphi(t)-\varphi(a))^{1-\sigma}\left|\tilde{u}(t)-u(t) \right|\\
&\leq \epsilon\, \left(  \frac{m\,\chi}{\Gamma(\sigma)}+ \lambda_\theta \,(\varphi(t)-\varphi(a))^{1-\sigma} \theta(t)\right)\\
&\qquad + \frac{1}{\Gamma(\sigma)} \sum_{a<t_k<t}(\varphi(t_k)-\varphi(a))^{1-\sigma}\left|\tilde{u}(t_k^-)-u(t_k^-) \right|\\
&\qquad +\frac{(\varphi(t)-\varphi(a))^{1-\sigma}}{\Gamma(\varrho)}\int_{a}^{t}\varphi^{'}(s)(\varphi(t)-\varphi(s))^{\varrho-1}\,(\varphi(s)-\varphi(a))^{1-\sigma}|\tilde{u}(s)-u(s)|\, ds.
\end{align*}
By application of Lemma \eqref{JI} to the above inequality with $\mathcal{U}(t)$,
$\mathbf{g}(t)$ and $\beta_k$ as given in proof Theorem \ref{TUH} and 
$
\mathcal{V}(t)= \epsilon\, \left(  \frac{m\,\chi}{\Gamma(\sigma)}+ \lambda_\theta \,(\varphi(t)-\varphi(a))^{1-\sigma} \theta(t)\right),
$
 we obtain
 \begin{align*}
(\varphi(t)-\varphi(a))^{1-\sigma}\left|\tilde{u}(t)-u(t) \right|&
\leq \epsilon\, \left(  \frac{m\,\chi}{\Gamma(\sigma)}+ \lambda_\theta \,(\varphi(t)-\varphi(a))^{1-\sigma} \theta(t)\right) \mathcal{A}_{m,\varrho},
  \end{align*}
where $\mathcal{A}_{m,\varrho}$ is defined in \eqref{32}. Therefore
$$
(\varphi(t)-\varphi(a))^{1-\sigma}\left|\tilde{u}(t)-u(t) \right|\leq C_{m,\varrho,\theta}\, \epsilon \,(\chi+\theta(t)),
$$
where
$$
C_{m,\varrho,\theta}=\left(\frac{m}{\Gamma(\sigma)}+ \lambda_\theta \,(\varphi(T)-\varphi(a))^{1-\sigma}\right)\mathcal{A}_{m,\varrho}.
$$
Hence the problem \eqref{p1} is UHR stable.
\end{proof}
\begin{cor}
Let the hypotheses of the Theorem \ref{ThGHU} hold. Then, the problem \eqref{p1} is GHUR stable with respect to $(\theta,\chi)$.
\end{cor}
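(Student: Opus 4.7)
The plan is to observe that the inequality \eqref{HU2} governing GUHR stability coincides with \eqref{HU3} upon setting $\epsilon = 1$. Thus the proof of Theorem \ref{ThGHU} applies verbatim, with every appearance of $\epsilon$ replaced by $1$.

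Concretely, first I would use the analog of Remark \ref{rm} for the inequality \eqref{HU2} to extract perturbations $\mathcal{E}(t)$ and $\{\mathcal{E}_k\}$ bounded in absolute value by $\theta(t)$ and $\chi$, respectively. Then, invoking Lemma \ref{fie} to represent $\tilde{u}$ as an integral equation carrying these perturbation terms, constructing the solution $u$ of \eqref{p1} with the common initial datum $\mathbf{I}_{a^+}^{1-\sigma;\varphi}\tilde{u}(a)$ via Theorem \ref{exi}, and estimating via the Lipschitz hypotheses $(H_1)(ii)$ and $(H_2)(i)$ together with the standing bound $\mathbf{I}_{a^+}^{\varrho;\varphi}\theta(t) \leq \lambda_\theta\theta(t)$, I would arrive at the analog of \eqref{57} with $\epsilon = 1$.

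Finally, applying the generalized Gronwall inequality (Lemma \ref{JI}) with
$$\mathcal{V}(t) = \frac{m\chi}{\Gamma(\sigma)} + \lambda_\theta(\varphi(t)-\varphi(a))^{1-\sigma}\theta(t),$$
and $\mathcal{U}(t)$, $\mathbf{g}(t)$, $\beta_k$ as in the proof of Theorem \ref{TUH}, together with the monotonicity of $\varphi$ to bound $(\varphi(t_i)-\varphi(a))^{\varrho}$ by $(\varphi(T)-\varphi(a))^{\varrho}$, I would conclude
$$(\varphi(t)-\varphi(a))^{1-\sigma}|\tilde{u}(t)-u(t)| \leq C_{m,\varrho,\theta}\,(\theta(t)+\chi), \quad t \in J,$$
with the same constant $C_{m,\varrho,\theta} = \left(\frac{m}{\Gamma(\sigma)} + \lambda_\theta(\varphi(T)-\varphi(a))^{1-\sigma}\right)\mathcal{A}_{m,\varrho}$ as in Theorem \ref{ThGHU}. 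No substantive obstacle arises; the corollary is in essence Theorem \ref{ThGHU} specialized to $\epsilon = 1$.
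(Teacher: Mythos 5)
Your proposal is correct and coincides with the paper's own argument: the paper proves the corollary in one line by setting $\epsilon=1$ in Theorem \ref{ThGHU}, which is exactly the specialization you carry out (with the same constant $C_{m,\varrho,\theta}$). Your write-up merely spells out the details of that substitution.
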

\begin{proof}
Proof follows by setting $\epsilon=1$.
\end{proof}

\section{Examples}
\begin{ex}
Consider the following impulsive $\varphi$--HFDE
\begin{align}\label{ex1}
\begin{cases}
 ^H \mathbf{D}^{\varrho,\, \nu; \, \varphi}_{0^+}u(t)=\frac{\left( \varphi(t)-\varphi(0)\right) ^{1-\sigma}}{1+|u(t)|} + 3\, \sin^{2} (\varphi(t)-\varphi(0)),~t \in J=[0,1]-\{\frac{1}{2}\}\\
\Delta \mathbf{I}_{0^+}^{1-\sigma; \, \varphi}u(\frac{1}{2})= \frac{\left( \varphi(\frac{1}{2})-\varphi(0)\right) ^{1-\sigma} \left| u({\frac{1}{2}}^{-}) \right| }{ 1+ \left| u({\frac{1}{2}}^{-}) \right| }, \\
 \mathbf{I}_{0^+}^{1-\sigma; \, \varphi}u(0)= \zeta \in \mathbb{R}.
\end{cases}
\end{align}
\end{ex}
Define 
$f: (0,1]\times {\mathbb{R}} \to \mathbb{R}$
 by 
$$
f\left(t, u \right)= \frac{\left( \varphi(t)-\varphi(0)\right) ^{1-\sigma}}{1+|u|} + 3\, \sin^{2} (\varphi(t)-\varphi(0))
$$
and 
$\mathcal{J}_1:\mathbb{R}\to \mathbb{R}$
 by
 $$
\mathcal{J}_1(u)= \frac{\left( \varphi(\frac{1}{2})-\varphi(0)\right) ^{1-\sigma} \left| u \right| }{ 1+ \left| u \right| }.
$$
Note that for any $u,v\in \R $ and $t\in [0,1]$, we have
\begin{align*}
\left|f\left(t, u \right)-f\left(t, v \right) \right| 
&\leq  \left( \varphi(t)-\varphi(0)\right) ^{1-\sigma}\, \left|\frac{1}{ 1+\left| u \right|  }-\frac{1 }{ 1+\left| v \right|  } \right| \\
& \leq \left( \varphi(t)-\varphi(0)\right) ^{1-\sigma}\,\left|\,u-v\,\right|  
\end{align*}
and
\begin{align*}
\left|\mathcal{J}_1 (u)-\mathcal{J}_1 (v) \right| & =  \left( \varphi\left( \frac{1}{2}\right)-\varphi(0)\right) ^{1-\gamma} \left(\left|\,\frac{\left| u \right| }{ 1+\left|u \right|  }-\frac{\left| v \right| }{ 1+\left| v \right|  }\, \right| \right) \leq \left(\varphi\left( \frac{1}{2}\right)-\varphi(0)\right) ^{1-\sigma} |u-v|,
\end{align*}
 \begin{align*}
\left|\mathcal{J}_1 (u) \right|=\frac{\left( \varphi\left( \frac{1}{2}\right)-\varphi(0)\right) ^{1-\sigma} \left| u \right| }{ 1+ \left| u \right| }\leq \left( \varphi\left( \frac{1}{2}\right) -\varphi(0)\right) ^{1-\sigma}=\zeta_1. 
 \end{align*}
 Observe that $f$ and $\mathcal{J}_1$ satisfy the hypotheses $(H_1)$ and $(H_2)$. By applying the Theorem \ref{exi}, the problem \eqref{ex1} has a unique solution on $[0,1]$. Also by the Theorem \ref{TUH}, the problem \eqref{ex1} is UH stable. In addition for any solution $v\in {\mathcal{PC}}_{{1-\sigma};\,\varphi}\left( J,\,\mathbb{R}\right)$ of the inequality 
 \begin{align*}
 \begin{cases}
 |^H \mathbf{D}^{\varrho,\nu;\,\varphi}_{0^+}  v(t)-f(t,v(t))|\leq \epsilon,~ t \in J \\
  |\Delta \mathbf{I}_{0^+}^{1-\sigma; \, \varphi}v(\frac{1}{2})- \mathcal{J}_1(v({\frac{1}{2}}^-))| \leq \epsilon, 
 \end{cases}
 \end{align*}
 there exists a unique solution $u$ of the problem \eqref{ex1} such that
 $$
 \left\|v-u \right\|_{{\mathcal{PC}}_{{1-\sigma};\,\varphi}\left( J,\,\mathbb{R}\right)} \leq  C_{1,\varrho}\,\epsilon,
 $$
 where
 \begin{align*}
C_{1,\varrho}&=\left( \frac{1}{\Gamma(\sigma)}+\frac{(\varphi(1)-\varphi(0))^{1-\sigma+\varrho}}{\Gamma(\varrho+1)}\right) \left(1+\frac{1}{\Gamma(\sigma)}E_\varrho ((\varphi(1)-\varphi(0))^{1-\sigma+\varrho}) \right)\\
     & \qquad \times E_\varrho\left((\varphi(1)-\varphi(0))^{1-\sigma+\varrho} \right).
 \end{align*}
\section*{Acknowledgment}
The first author  acknowledges the Science and Engineering Research Board (SERB), New Delhi, India for the Research Grant (Ref: File no. EEQ/2018/000407).

\end{document}